\documentclass[11pt]{article}
\textwidth 15.5cm \textheight 22cm \voffset=-1cm \hoffset=-1.7cm
\usepackage{graphicx}
\usepackage{mathrsfs}
\usepackage{xypic}

\usepackage{color}
\usepackage{amsfonts}
\usepackage{amsmath}
\usepackage{amssymb}
\usepackage{bm}
\usepackage{CJK}
\newtheorem{theorem}{Theorem}[section]
\newtheorem{lemma}[theorem]{Lemma}
\newtheorem{definition}[theorem]{Definition}

\newtheorem{conjecture}[theorem]{Conjecture}
\newtheorem{problem}[theorem]{Problem}
\numberwithin{equation}{section}

\newcommand{\ba}{\begin{array}}
	\newcommand{\ea}{\end{array}}
\newcommand{\bt}{\begin{tabular}}
	\newcommand{\et}{\end{tabular}}
\newcommand{\btb}{\begin{table}}
	\newcommand{\etb}{\end{table}}
\newcommand{\bc}{\begin{center}}
	\newcommand{\ec}{\end{center}}
\newcommand{\bea}{\begin{eqnarray}}
	\newcommand{\eea}{\end{eqnarray}}
\newcommand{\Bea}{\begin{eqnarray*}}
	\newcommand{\Eea}{\end{eqnarray*}}
\newcommand{\beq}{\begin{equation}}
	\newcommand{\eeq}{\end{equation}}

\begin{document}
	\baselineskip 16.5pt

	\title{Studying the  divisibility of power LCM matrics by power GCD matrices on  gcd-closed sets
	}
	\author{
		Jianrong Zhao$^{a,b,1}$ 
		\thanks{J.R. Zhao  is the corresponding author and was  supported by National Science Foundation of China Grant \#12371333.}
		\thanks{$^1$ E-Mail: mathzjr@swufe.edu.cn. mathzjr@foxmail.com $^2$ E-mail: wcx822@foxmail.com. $^3$ E-mail: fuyu17761301719@163.com}
		,~Chenxu Wang$^{a,2}$, Yu Fu$^{c,3}$
	}
	\date{}
	\maketitle
	\begin{center}
		$^a$ School of  Mathematics,	
		Southwestern University of Finance and Economics,
		
		Chengdu 611130, P. R. China.
		
		$^b$ Big Data Laboratory on Financial Security and Behavior, SWUFE (Laboratory of Philosophy and Social Sciences, Ministry of Education), Chengdu 611130, China

		$^c$ Department of Mathematics, Sichuan University,
		
		Chengdu 610064, P. R. China.
		
	\end{center}
	
	\noindent{\bf Abstract.}
	Let  $S=\{x_1,\ldots, x_n\}$ be a gcd-closed set (i.e. $(x_i,x_j)\in S $ for all $1\le i,j\le n$). In 2002, Hong proposed the divisibility problem of characterizing all gcd-closed sets $S$ with $|S|\ge 4$ such that  the GCD matrix $(S)$ divides the LCM matrix $[S]$ in the ring $M_{n}(\mathbb{Z})$.
	For $x\in S,$ let $G_S(x):=\{z\in S:  z<x, z|x \text{ and } (z|y|x, y\in S)\Rightarrow y\in\{z,x\}\}$. In 2009, Feng, Hong and Zhao answered this problem in the context where $\max_{x \in S}\{|G_S(x)|\} \leq 2$. In 2022, Zhao, Chen and Hong obtained a necessary and sufficient condition on the gcd-closed set $S$ with $\max_{x \in S}\{|G_S(x)|\}=3$ such that $(S)|\left[S\right].$  Meanwhile, they raised a conjecture on the necessary and sufficient condition  such that $(S)|\left[S\right]$ holds  for the remaining case $\max_{x \in S}\{|G_S(x)|\}\ge 4$.
	In this papar, we confirm the Zhao-Chen-Hong conjecture from a novel perspective,  consequently solve Hong's open problem completely.
	\vspace{1mm}
	
	\noindent \textbf{Keywords:} GCD matrix,  LCM matrix, Gcd-closed set,
	Greatest-type divisor, Divisibility, M\"obius function
	\vspace{1mm}
	
	\noindent {\small \textbf{MR(2000) Subject Classification:} Primary
		11B73, 11A07, 15C36}
	
	
	\section{Introduction}
	In 1875, Smith\cite{S1875} presented his celebrated result, which states that the determinant of the matrix $((i, j))$ is given by $\operatorname{det}((i, j)) = \prod_{k=1}^{n} \varphi(k)$, where the $n \times n$ matrix $((i, j))$ has its $ij$-entry defined as the greatest common divisor  of $i$ and $j$, and $\varphi$ denotes Euler's totient function. In the same paper, Smith also established that the determinant of the $n \times n $ matrix $ (f(i, j)) $, where each entry is defined as $ f $ evaluated at the greatest common divisor $ ij $ of indices $ i $ and $ j $, is given by 	
	$
	\prod_{k=1}^{n}(f * \mu)(k),
	$	
	where $ f * \mu $ denotes the Dirichlet convolution of the arithmetic function $ f $ with the M\"obius function $\mu$. 
	Subsequently, the results of Smith, as compiled by Dickson in \cite{D71B}, garnered significant attention from scholars and led to several intriguing generalizations, see e.g. \cite{A72PM,BL89AMC,BL93JNT,CN02BUM,H02AA,L88LMA,M86CMB,HHL16AM,HL18PM}. 
	
	Let $S=\{x_1,\ldots, x_n\}$ be a set of $n$ distinct positive integers.
	Given any real number $e\ge 0$, denote by $((x_i, x_j)^e)$ (abbreviated by $(S^e)$ ) the  $n \times n
	$  matrix  having  the $e$-th power of  greatest common divisor  $(x_i, x_j)$ of  $x_i$  and  $x_j$  as its  $i j$ -entry matrix,	
	and $([x_i, x_j]^e)$ (abbreviated by $[S^e]$) the  $n \times n
	$  matrix  having  the $e$-th power of   least common multiple  $[x_i, x_j]$ of  $x_i$  and  $x_j$  as its  $ij$-entry matrix.
	We say that $(S^e)$ and $[S^e]$ the {\it  power greatest  common divisor (GCD) matrix} and the {\it power least common multiple (LCM)	 matrix}, respectively.	 
	If \( e = 1 \), we refer to \( (S) \) and \( [S] \) as the {\it greatest common divisor (GCD) matrix} and the {\it least common multiple (LCM) matrix}, respectively.
	In 1989, Beslin and Ligh \cite{BL89LAA} demonstrated that the GCD matrix $(S)$ on any set $S$ of positive integers is always nonsingular. Furthermore, in 1993, Bourque and Ligh \cite{BL93LMA} proved that the power GCD matrix $(S^e)$ on any set $S$ of positive integers is also nonsingular. However, it is difficult to characterize the nonsingularity of the power LCM matrix $[S^e]$ on any set $S$ of positive integers. Numerous findings regarding the nonsingularity of the power LCM matrix defined on specific sets have been obtained, see e.g. \cite{AST05LMA,BL92LAA,C07CMJ,HMM20JCTA,H99JA,HSS06AC,IK18LAA,L07JA,MHM15JCTA}. 
	
	Let $e \ge 1$ be a given integer. Based on the  fact that  power GCD matrices $(S^e)$ on any finite  set $S$ of  positive integers
	are nonsinglular, Bourque and Ligh \cite{BL92LAA} first explored in 1992   the divisibility of power LCM matrices $[S^e]$ by power GCD matrices $(S^e)$ on $S$ in
	the ring $M_n(\mathbb{Z})$ of $n\times n$ matrices over the integers. Indeed, Bourque and Ligh \cite{BL92LAA} showed that  if
	$S=\{x_1, ..., x_n\}$ is factor closed (i.e. $ d\in S$ if $d \mid x$ for any $x \in S$), then the power GCD matrix $(S^e)$  always divides the power LCM matrix $[S^e]$ in	the ring $M_n(\mathbb{Z})$.
	That is, there is an integer matrix $M\in M_n(\mathbb{Z})$ such that $[S^e]=(S^e)M$ or
	$[S^e]=M(S^e)$(i.e. $(S^e)^{-1}[S^e]\in M_n(\mathbb{Z})$, or
	$[S^e](S^e)^{-1}\in M_n(\mathbb{Z})$.
	
	In 2002, Hong \cite{H02LAA} conducted an investigation into the divisibility of LCM matrices by GCD matrices on  gcd-closed set.
	The set $S$ is called {\it gcd closed} if $(x_i,x_j)\in S$ for all $1\le i,j
	\le n$. Clearly, any factor closed set is gcd closed but the converse is not true.
	Hong \cite{H02LAA} proved that for any gcd-closed set  $S$  with  $|S| \leq 3$, the GCD matrices $(S)$ divides the LCM matrices $[S]$. But for any integer  $n \geq 4$, there exists a gcd-closed set  $S$  with  $|S|=n$  such that  $(S)$ does not divide $[S]$. Consequently, Hong has posed the following open problem.
	
	\begin{problem}\cite{H02LAA}\label{P1}
		Let $n\geq 4$. Find the necessary and sufficient conditions on the gcd-closed set $S$ with $|S|=n$ such that the GCD matrix $(S)$ divides the LCM matrix $\left[S\right]$ in the ring $M_n(\mathbb{Z}).$
	\end{problem}
	
	A crucial perspective for investigating Problem \ref{P1} is to examine the structure of the divisibility relations among the elements of the gcd-closed set $S$. In 2003, Hong \cite{H03CM} proved that $(S)$ divides $[S]$ when $S$ is a divisor chain. For more general gcd-closed sets, one may need the concept of greatest-type divisor, which was defined by Hong in \cite{H99JA} to solve the Bourque-Ligh conjecture \cite{BL92LAA}. For $x,y\in S$ and $x<y$, we say that $x$ is a {\it greatest-type divisor}   of $y$ in $S$ if $x|y$ and the conditions $x|z|y$ and $z\in S$ 
	imply that $z\in \{x,y \}$. For $x\in S$, we
	denote by $G_S(x)$  the set of all greatest-type divisors of $x$ in $S$.
	In 2008, Hong, Zhao and Yin \cite{HZY08AA} constructed an integer matrix \( M \) such that for any gcd-closed set \( S \) with \( \max_{x \in S} \{|G_S(x)|\} = 1 \), the equation \( (S^e)M = [S^e] \) holds. This addressed Problem \ref{P1} for the case where $\max_{x \in S} \{|G_S(x)|\}=1$. To investigate the case where $\max_{x \in S} \{|G_S(x)|\} \ge 2$, Feng, Hong and Zhao\cite{FHZ09DM} introduced the following  key condition that was subsequently extended by Zhao, Chen and Hong in \cite{ZCH22JCTA}. 
	
	\begin{definition}\cite{ZCH22JCTA}
		We say that an element $x\in S$ with $ |G_S(x)| \ge 2 $ satisfies the 
		condition $ \mathcal{C} $ if $ [y,z] =x \ \text{and} \ (y,z) \in G_S(y) \cap G_S(z) $ for any $ y,z \in G_S(x) $ with $y\neq z$.  We further state that  the set $ S $ satisfies the condition $ \mathcal{C} $ if 
		each element $ x \in S $ with $ |G_{S}(x)|\geq 2 $ satisfies the condition $ \mathcal{C} $.
	\end{definition}
	
	In fact, Feng, Hong and Zhao \cite{FHZ09DM}  show that if $S$ is  gcd-closed  and  $\max_{x\in S}\{|G_S(x)|\}\leq 2,$ then $(S^e)|\left[S^e\right]$ if and only if either $\max_{x\in S}\{|G_S(x)|\}=1$ or $\max_{x\in S}\{|G_S(x)|\}=2$ with $S$ satisfying the condition $\mathcal{C}$. This result addresses Problem  \ref{P1} under the constraint that $ \max_{x\in S}\{|G_S(x)|\} \leq 2. $
	In 2014, Zhao \cite{Z14LMA} showed that $(S^e)$ does not divide $[S^e]$ when $S$ is gcd-closed with $\max_{x\in S}\{|G_S(x)|\} \ge 3$ and $|S| \le 7$. Furthermore, he proposed the following conjecture.
	\begin{conjecture} \cite{Z14LMA}\label{C1}
		Let $e\ge 1$ be an integer and  $S=\{x_1,\ldots,x_n\}$ be a gcd-closed set with $ \max_{x \in S} \{|G_{S}(x)|\}=m \geq 4 $. If $n<{m\choose 2} +m+2$, then $(S^e)$ does not divide $[S^e]$.
	\end{conjecture}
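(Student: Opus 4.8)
The plan is to prove the contrapositive: assuming $(S^e)\mid[S^e]$ in $M_n(\mathbb{Z})$, I will show that $n\ge \binom{m}{2}+m+2$. Fix an element $x_0\in S$ realizing the maximum, so that $G_S(x_0)=\{y_1,\dots,y_m\}$ with $m\ge 4$, and write $w_{kl}:=(y_k,y_l)$ for the pairwise gcd's, which all lie in $S$ since $S$ is gcd-closed. The first step is to put the divisibility hypothesis into a usable analytic form. Writing $J_e:=N^e*\mu$ for the Jordan totient, so that $t^e=\sum_{d\mid t}J_e(d)$ and hence $(x_i,x_j)^e=\sum_{d\mid x_i,\,d\mid x_j}J_e(d)$, the relation $[S^e]=(S^e)M$ with $M\in M_n(\mathbb{Z})$ says that the $x_0$-column $(c_1,\dots,c_n)$ of $M$ is an integer vector with $\sum_{j}c_j(x_i,x_j)^e=[x_i,x_0]^e$ for every $i$. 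Substituting the Jordan-totient expansion and grouping by $d$ rewrites this as $\sum_{d\mid x_i}J_e(d)\,C_d=[x_i,x_0]^e$, where $C_d:=\sum_{x_j\in S,\ d\mid x_j}c_j$. Because $S$ is gcd-closed, $C_d$ depends only on the least element of $S$ divisible by $d$, and the system is triangular along the divisibility order on $S$; solving it step by step expresses each $c_j$ through the values $[x_i,x_0]^e$ and the M\"obius function $\mu_S$ of the poset $(S,\mid)$. Thus $(S^e)\mid[S^e]$ is equivalent to the integrality of all these $c_j$.

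The second step extracts structure by reading this system near $x_0$. Examining the rows indexed by the $y_k$ and by $x_0$, and using that each $y_k$ is a greatest-type divisor of $x_0$, integrality forces condition $\mathcal{C}$ at $x_0$, namely $[y_k,y_l]=x_0$ and $(y_k,y_l)=w_{kl}\in G_S(y_k)\cap G_S(y_l)$ for all $k\ne l$; this is the natural extension of the necessity arguments already established for $m\le 3$. From $[y_k,y_l]=x_0$ one reads off the arithmetic identity $w_{kl}=y_ky_l/x_0$, which immediately shows that two meets sharing an index are distinct (if $w_{kl}=w_{kr}$ then $y_l=y_r$). Hence the $y_k$ are $m$ distinct proper divisors of $x_0$, and the $w_{kl}$ are distinct from all $y$'s and from $x_0$. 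Moreover, since $w_{k1},\dots,w_{km}$ (omitting $w_{kk}$) are distinct greatest-type divisors of $y_k$, we have $|G_S(y_k)|\ge m-1\ge 3$, so condition $\mathcal{C}$ \emph{propagates} to each $y_k$: for distinct indices $l,r\ne k$ it yields $(w_{kl},w_{kr})=(y_k,y_l,y_r)\in G_S(w_{kl})$, a genuine element of $S$ lying strictly below the meet-level.

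The decisive third step is to account for exactly $\binom{m}{2}+m+2$ distinct elements. The top $x_0$, the $m$ values $y_k$, and the index-distinct meets are already in hand; what remains is to exclude coincidences $w_{kl}=w_{rs}$ between \emph{disjoint} pairs (equivalently $y_ky_l=y_ry_s$) and to certify one further element strictly below all meets. For the first, I would feed a hypothetical disjoint coincidence back into the triangular system of Step one: such a collapse merges two of the coefficients $C_d$ and, tracking its effect through the M\"obius inversion over $(S,\mid)$, produces a negative $p$-adic valuation in some $c_j$ for a prime $p\mid x_0$, contradicting integrality; crucially this uses $m\ge 4$, since for $m=3$ the three pairs pairwise share an index and no disjoint coincidence can occur. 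For the second, the propagated condition $\mathcal{C}$ of Step two supplies the element $(y_1,y_2,y_3)$ as a greatest-type divisor of $w_{12}$, strictly below the whole meet-level and, by the same valuation bookkeeping, distinct from every $w_{kl}$. Together these give $1+m+\binom{m}{2}+1=\binom{m}{2}+m+2$ distinct members of $S$, whence $n\ge\binom{m}{2}+m+2$.

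I expect the third step to be the main obstacle. Establishing condition $\mathcal{C}$ and the distinctness of index-sharing meets is essentially arithmetic and parallels the known $m\le 3$ theory, but ruling out disjoint coincidences is where $m\ge 4$ is genuinely used and where the argument becomes delicate: one must show uniformly in $e$ that any such collapse, or any failure to produce the extra lower element, destroys the integrality of some coefficient $c_j$. Controlling how a single identification among the $C_d$ propagates through the double inversion — the zeta/M\"obius transform on $(S,\mid)$ together with the divisor-lattice inversion — and doing so without unbounded case analysis in $m$ is precisely the point at which the \emph{novel perspective} on $(S^e)^{-1}$ must carry the proof.
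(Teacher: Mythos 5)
Your overall route --- assume $(S^e)\mid[S^e]$, extract condition $\mathcal{C}$ at a maximizing element $x_0$, then count enough distinct elements of $S$ --- is the same route the paper takes: Conjecture \ref{C1} is obtained there as a corollary of Theorem \ref{thm1} together with Lemma \ref{lem3.1}. But your Step two contains the decisive gap. You assert that ``integrality forces condition $\mathcal{C}$ at $x_0$'' as ``the natural extension of the necessity arguments already established for $m\le 3$.'' This is precisely the necessity half of the paper's main theorem, and it is \emph{not} a routine extension: the known $m\le 3$ arguments break down because one can no longer verify by finite case analysis that the relevant quantities $g(k,m)$ lie strictly in $(0,1)$. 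The paper has to prove a new positivity result (Lemma \ref{lem8}: for arbitrary proper divisors $y_1,\dots,y_k$ of $x$, the alternating sum $x^e+\sum_{I\ne\emptyset}(-1)^{|I|}\gcd(Y_I)^e$ is positive, established by identifying it with $\alpha_{e,Y}(x)$ for an auxiliary gcd-closed set $Y$ and invoking positive definiteness of $(Y^e)$), and then carry out delicate constructions of specific indices $k$ and $d$ (Lemma \ref{lem9}) plus a maximality argument over the divisor order to convert $g(k,m)\notin\mathbb{Z}$ into a non-integer entry of $[S^e](S^e)^{-1}$. None of this is supplied or even sketched in your proposal; your ``triangular system / M\"obius inversion'' framing restates the problem rather than solving it.

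Your Step three is also misdirected, though fixable. Ruling out a disjoint coincidence $w_{kl}=w_{rs}$ does not require feeding anything back into the integrality system or tracking $p$-adic valuations: once $[y_i,y_j]=x_0$ holds for all pairs, the gcd--lcm duality (Lemma \ref{lem4}) gives $\gcd(Y_H)=\bigl(\prod_{h\in H}y_h\bigr)x_0^{1-|H|}$ for every $H$, and comparing $\gcd(Y_{\{k,l,r,s\}})$ with $\gcd(Y_{\{k,l,r\}})$ forces $y_s=x_0$, a contradiction. This is exactly the paper's Lemma \ref{lem3.1}, which shows all $2^m-1$ subset-gcds of $G_S(x_0)$ are pairwise distinct, so that one in fact gets $n\ge 2^m>\binom{m}{2}+m+2$ --- strictly stronger than the count $1+m+\binom{m}{2}+1$ you assemble, and obtained without any appeal to integrality beyond establishing condition $\mathcal{C}$ itself. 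In short: your counting can be repaired by elementary arithmetic, but the reduction of the divisibility hypothesis to condition $\mathcal{C}$ --- the actual content of the theorem --- is missing.
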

	
	In 2017, Altinsik, Yildiz and Keskin \cite{AYK17LAA} presented some certain gcd-closed sets on which $[S]$ does not divide by $(S)$, which implies that  Conjecture \ref{C1} might be true.  For $x\in S
	$ with  $|G_{S}(x)| \geq 2$, we say that $x$ satisfies the condition $ \mathcal{M} $ if for all distinct elements $ y,z \in G_S(x) $, it holds that $ [y,z] =x $.  Also,  we say that the set $ S $ satisfies the condition $ \mathcal{M} $
	if each element $ x \in S $ with $ |G_{S}(x)| \geq 2 $ satisfies the condition $ \mathcal{M} $.
	Furthermore, Altinsik, Yildiz and Keskin \cite{AYK17LAA} characterized the gcd-closed set \( S \) such that \( (S)|\left[S\right] \) for the case when \( |S|=8 \). This indicates that Problem \ref{P1} has been resolved for the case when \( |S|=8 \). At the end of 
	\cite{AYK17LAA},  they proposed the following conjecture as a generalization of Conjecture \ref{C1}.

	\begin{conjecture}\cite{AYK17LAA}\label{C2}
		Let $S$ be a gcd-closed set with $ \max_{x \in S} \{|G_{S}(x)|\}\geq 2 $. If $S$ does not satisfy the condition $\mathcal{M}$, then $(S)$ does not divide $[S]$.
	\end{conjecture}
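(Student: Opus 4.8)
The plan is to reduce the statement to the non-integrality of a single, explicitly computable entry of $(S)^{-1}[S]$, and to pin down that entry using the greatest-type divisor structure. Since $(S)$ and $[S]$ are symmetric, $(S)\mid[S]$ in $M_n(\mathbb{Z})$ is equivalent to $(S)^{-1}[S]\in M_n(\mathbb{Z})$, so it suffices to produce one integer right-hand side $\mathbf{b}$ and one index for which the solution of $(S)\mathbf{u}=[S]\mathbf{b}$ fails to be integral. My first step is to record the Möbius factorization available on a gcd-closed set: writing $Z=\big([x_j\mid x_i]\big)_{i,j}$ for the zeta matrix of the poset $(S,\mid)$ and $\mu_S$ for its Möbius function, one has $(S)=ZGZ^{T}$ with $G=\operatorname{diag}(\alpha_{x_1},\dots,\alpha_{x_n})$, where $\alpha_{x_k}=\sum_{x_j\mid x_k}\mu_S(x_j,x_k)\,x_j$ are the positive integer local determinant factors (so $\det(S)=\prod_k\alpha_{x_k}$). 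Although $[S]$ admits an analogous factorization, the computation only requires $(S)=ZGZ^{T}$ together with the explicit entries $[x_i,y]$ of a chosen column of $[S]$. Solving $(S)\mathbf{u}=[S]\mathbf{b}$ by two rounds of Möbius inversion along the poset then expresses each coordinate as a signed sum of the $\alpha_{x_k}^{-1}$; concretely, with $\mathbf{b}=\mathbf{e}_{y}$ one gets $u_i=\sum_{k:\,x_i\mid x_k}\mu_S(x_i,x_k)\,v_k/\alpha_{x_k}$, where $v_k=\sum_{x_j\mid x_k}\mu_S(x_j,x_k)\,[x_j,y]$.

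Next I fix a witness for the failure of $\mathcal{M}$: an element $x\in S$ carrying two distinct greatest-type divisors $y,z\in G_S(x)$ with $w:=[y,z]\neq x$. A short argument shows $w\notin S$ (otherwise $y\mid w\mid x$ and $y\in G_S(x)$ force $w\in\{y,x\}$, contradicting $z\mid w$, $z\neq y$, $w\neq x$), so $w$ is a proper divisor of $x$ and I may fix a prime $p\mid x/w$. I would probe with $\mathbf{b}=\mathbf{e}_{y}$ and read off the coordinate at $x$. When $x$ is maximal in $S$ the sum above collapses to a single term, giving
\[
u_x=\frac{1}{\alpha_x}\sum_{x_j\mid x}\mu_S(x_j,x)\,[x_j,y]\;=:\;\frac{N}{\alpha_x}.
\]
The goal is then to show $\alpha_x\nmid N$. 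Using $\mu_S(y,x)=-1$ (the interval $[y,x]$ in $S$ is a two-element chain since $y\in G_S(x)$) and $\mu_S(x,x)=1$, and noting that the only divisors $x_j\mid x$ in $S$ with $y\mid x_j$ are $y$ and $x$ itself, one isolates
\[
N=x-y+\sum_{\substack{x_j\mid x\\ y\nmid x_j}}\mu_S(x_j,x)\,\frac{x_j\,y}{(x_j,y)}.
\]

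The second greatest-type divisor $z$ now enters decisively: its summand equals $\mu_S(z,x)\,\dfrac{z\,y}{(z,y)}=-[y,z]=-w$, so $N=x-y-w+\cdots$, and it is through this term that the forbidden lcm $w=[y,z]\notin S$ is injected. Writing $v_p$ for the $p$-adic valuation, the target inequality is $v_p(N)<v_p(\alpha_x)$ for the chosen prime $p\mid x/w$, which yields $u_x\notin\mathbb{Z}$ and hence $(S)\nmid[S]$, establishing the contrapositive of the conjecture. The main obstacle is twofold. First, the clean formula for $u_x$ holds only when $x$ is maximal in $S$; for a general witness $u_x$ acquires extra contributions $\sum_{x\mid x_k}\mu_S(x,x_k)\,v_k/\alpha_{x_k}$ from the strict multiples of $x$, and one must either choose the witness so as to suppress these or prove they cannot cancel the surviving $p$-adic defect — note that simply restricting to the down-set $\{d\in S:d\mid x\}$ does not preserve divisibility, because the corresponding block of $(S)^{-1}$ is a Schur complement rather than the inverse of $(\{d\in S:d\mid x\})$. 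Second, and more seriously, the valuation estimate $v_p(N)<v_p(\alpha_x)$ must be made uniform over all configurations with $|G_S(x)|\ge 2$: one has to control the signs and $p$-adic sizes of every term $\mu_S(x_j,x)\,x_j\,y/(x_j,y)$ against $\alpha_x=\sum_{x_j\mid x}\mu_S(x_j,x)x_j$, extracting from $w\notin S$ the factor of $p$ forced in $\alpha_x$ while bounding its occurrence in $N$. Making this comparison robust is where the genuine work — and presumably the paper's novel perspective — lies.
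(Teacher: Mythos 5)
You should first note that the paper does not prove this conjecture by any self-contained computation: it is deduced in the Remarks as an immediate corollary of Theorem \ref{thm1} together with the earlier characterizations for $\max_{x\in S}\{|G_S(x)|\}\le 3$ from \cite{FHZ09DM} and \cite{ZCH22JCTA}. Since condition $\mathcal{C}$ demands $[y,z]=x$ for all distinct $y,z\in G_S(x)$, failure of $\mathcal{M}$ forces failure of $\mathcal{C}$, and the combined characterization ($(S^e)\mid[S^e]$ iff $S$ satisfies $\mathcal{C}$, in every case with $\max_{x\in S}\{|G_S(x)|\}\ge 2$) then gives $(S)\nmid[S]$. Your proposal instead attacks the statement directly. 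Its skeleton is sound and is in fact the same object the paper manipulates: Möbius-inverting $(S)\mathbf{u}=[S]\mathbf{e}_y$ along the poset produces exactly the local quotients $g(k,m)$ of (\ref{3}), your $v_k/\alpha_{x_k}$ being $g(\,\cdot\,,k)$ with probe index $y$. Your observation that $w=[y,z]\notin S$ is correct, as is the formula for $N$ when $x$ is maximal in $S$.

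However, both obstacles you flag are genuine gaps, and the paper closes them by different means than you sketch. For the non-maximality of the witness, the paper does not restrict to a down-set or a Schur complement: it takes $x_m$ to be the \emph{largest} element violating the condition and then $x_s$ the largest multiple of $x_m$ with $g(l,s)\notin\mathbb{Z}$, whereupon $\bigl([S^e](S^e)^{-1}\bigr)_{ls}=g(l,s)+\sum_{x_s\mid x_k,\,x_s<x_k}c_{sk}\,g(l,k)$ is a non-integer plus integers. For the non-integrality itself, the paper uses no $p$-adic valuation at all. It proves the positivity of the alternating sum $x^e+\sum_{I\ne\emptyset}(-1)^{|I|}\gcd(Y_I)^e$ over proper divisors (Lemma \ref{lem8}) and chooses the probe index not as $y$ itself but as $x_k=\gcd(Y_{L\setminus W})$ for a carefully selected subset $W$ of indices of $G_S(x_m)$; this choice guarantees that every $[y_i,x_k]$ is a \emph{proper} divisor of $x_m$ and that $\bigl(x_k,\gcd(Y_{I\cup\{i\}})\bigr)=\gcd(Y_L)$, from which both the numerator and $\alpha_{e,S}(x_m)$ minus the numerator are shown positive, i.e.\ $0<g(k,m)<1$. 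Your probe $\mathbf{b}=\mathbf{e}_y$ with $y\in G_S(x)$ is unlikely to survive this mechanism: for indices $j$ with $[y,y_j]=x$ (which typically exist when only one pair violates $\mathcal{M}$) the terms $[y,\gcd(Y_I)]$ equal $x$ itself, the positivity lemma no longer controls the numerator, and no argument is offered for the asserted inequality $v_p(N)<v_p(\alpha_x)$ --- indeed nothing ties the valuation of $\alpha_x$ to a prime dividing $x/w$. So the proposal identifies the right quantity to compute but leaves the decisive step, certifying non-integrality for some admissible probe, unproved; the paper's sandwich argument $0<g<1$ is the missing idea.
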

	
	In 2022, Zhao, Chen and Hong \cite{ZCH22JCTA} proved that if \( S \) is gcd-closed and \( \max_{x\in S}\{|G_S(x)|\} = 3 \), then \( (S^e) | [S^e] \) if and only if \( S \) satisfies the condition \( \mathcal{C} \). Consequently, Problem \ref{P1} remains unresolved for the case where \( \max_{x\in S}\{|G_S(x)|\} \geq 4 \) and \( |S| \geq 9 \). In addition, they proposed the following conjecture.
	
	\begin{conjecture}\cite{ZCH22JCTA} \label{C3}
		Let $S$ be a gcd-closed set with $ \max_{x \in S} \{|G_{S}(x)|\} \geq 4 $ and  $e$ be a positive integer. Then the power LCM matrix $\left[S^e\right]$ is divided by the power GCD matrix $\left(S^e\right)$ in the ring $M_n\left(\mathbb{Z}\right)$ if and only if the set $S$ satisfies the condition $\mathcal{C}$.
	\end{conjecture}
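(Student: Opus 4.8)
The plan is to recast the divisibility as an integrality statement about denominators, and then to treat the two implications separately, with the sufficiency direction carrying the main difficulty. Since $S$ is gcd-closed, for each $d\in S$ set $g_e(d):=\sum_{c\in S,\,c\mid d}\mu_S(c,d)\,c^{e}$, where $\mu_S$ is the M\"obius function of the poset $(S,\mid)$. A short M\"obius-inversion computation, using that $(x_i,x_j)\in S$, gives $\sum_{d\in S,\,d\mid m}g_e(d)=m^{e}$ for every $m\in S$, and hence the factorization $(S^{e})=Z^{\top}GZ$, where $Z=\big([\,d\mid x\,]\big)_{d,x\in S}$ is the zeta matrix of the poset and $G=\operatorname{diag}\big(g_e(d)\big)_{d\in S}$. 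As $Z$ is unit triangular it is invertible, with the integers $\mu_S(\cdot,\cdot)$ as the entries of $Z^{-1}$; since $(S^{e})$ is nonsingular, so is $G$, i.e. each $g_e(d)\neq0$. Therefore
\[
(S^{e})^{-1}[S^{e}]=Z^{-1}G^{-1}Z^{-\top}[S^{e}],
\]
so every entry of $(S^{e})^{-1}[S^{e}]$ is a fixed $\mathbb{Z}$-linear combination of the reciprocals $1/g_e(d)$. Because $(S^{e})\mid[S^{e}]$ in $M_n(\mathbb{Z})$ is equivalent to $(S^{e})^{-1}[S^{e}]\in M_n(\mathbb{Z})$, the whole problem reduces, prime by prime, to deciding whether the denominators $g_e(d)$ are cleared, which localizes everything to the arithmetic of $g_e$ and the local order structure of $G_S(d)$.

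For the sufficiency direction I would assume $S$ satisfies $\mathcal{C}$ and exhibit the cleared matrix $M=(S^{e})^{-1}[S^{e}]$ explicitly. Condition $\mathcal{C}$ forces a cube-like local geometry: if $G_S(d)=\{y_1,\dots,y_m\}$, then the $y_i$ are exactly the coatoms of the interval below $d$, their pairwise meets $(y_i,y_j)$ are again greatest-type divisors of the $y_i$, and iterating this closure shows that the relevant subinterval of $(S,\mid)$ is a Boolean lattice. On such intervals $\mu_S$ takes the values $(-1)^{\mathrm{rank}}$, so $g_e(d)$ factors as a product reflecting the cube, and the M\"obius sums defining each entry of $M$ telescope. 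Concretely I would express each entry of $M$ through an inclusion--exclusion over subsets of $G_S(d)$ and reduce its integrality to a single M\"obius identity on a Boolean lattice, to be verified uniformly in $m$. This uniform treatment, replacing the finite case checking available when $m\le3$, is the novel perspective demanded by the range $m\ge4$.

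For the necessity direction I would prove the contrapositive: if $S$ violates $\mathcal{C}$ then some entry of $(S^{e})^{-1}[S^{e}]$ is non-integral. There are two failure modes at an offending $x$ with $y,z\in G_S(x)$, namely $[y,z]\neq x$ (so $\mathcal{M}$ already fails), or $[y,z]=x$ but $(y,z)\notin G_S(y)\cap G_S(z)$. In each mode I would pass to the gcd-closed subset generated by the witnesses $\{x,y,z,(y,z)\}$, compute the corresponding entry of $M$ from the formula above, and produce a prime $p$ whose $p$-adic valuation of that entry is negative, the point being that the factor of $g_e$ attached to the witnesses fails to cancel once the cube structure is broken. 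A small augmentation lemma is then needed to ensure that this local obstruction persists when the witnesses are embedded in the full set $S$.

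The step I expect to be the main obstacle is the sufficiency argument for large $m$. For $m\le3$ the greatest-type divisors and their iterated gcds are few enough to settle the telescoping by direct case analysis, but for arbitrary $m$ the inclusion--exclusion over the up-to-$m$ coatoms and all their meets yields an alternating sum whose total cancellation must be established once and for all. Pinning down the exact Boolean structure that $\mathcal{C}$ guarantees on these intervals, and proving the resulting M\"obius identity in full generality, is the technical heart of the proof and the place where the earlier case-by-case methods break down.
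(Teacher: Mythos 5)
Your setup is essentially the paper's: the factorization $(S^{e})=Z^{\top}GZ$ with diagonal entries $g_e(d)$ is exactly the Bourque--Ligh decomposition $(S^e)=E\varDelta E^{T}$ with $\varDelta=\operatorname{diag}(\alpha_{e,S}(x_i))$, and reducing integrality of $(S^{e})^{-1}[S^{e}]$ to integrality of the quantities $g(k,m)=\alpha_{e,S}(x_m)^{-1}\sum_{x_r\mid x_m}c_{rm}[x_k,x_r]^e$ is the paper's route too. Your sufficiency sketch also correctly identifies the Boolean-lattice structure that condition $\mathcal{C}$ imposes on $\langle G_S(x)\rangle\cup\{x\}$ (the paper's Lemma 3.2). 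But two caveats: first, the ambient interval $[\gcd(Y_L),x]$ in $(S,\mid)$ may contain elements of $S$ other than the meets $\gcd(Y_I)$, so your claim that $\mu_S=(-1)^{\mathrm{rank}}$ on that interval is not justified; the paper sidesteps this by using Hong's formula, which expresses $\alpha_{e,S}(x)$ and the numerator of $g(k,m)$ purely through the $\gcd(Y_I)$. Second, ``the M\"obius sums telescope'' is asserted but the actual cancellation mechanism is the nontrivial identity $[x_k,\gcd(Y_I)]=[x_k,\gcd(Y_{I\cup\{l\}})]$ for a suitable coatom $y_l$ with $(x_k,x_m)\nmid y_l$, which rests on an inductive non-divisibility propagation $\bigl(x_k,\gcd(Y_I)\bigr)\nmid y_l$ (the paper's Lemma 3.3). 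That lemma is the technical heart of sufficiency and is missing from your outline.

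The necessity direction has a genuine gap. Your plan is to pass to the gcd-closed subset generated by the witnesses $\{x,y,z,(y,z)\}$, detect a prime with negative valuation in some entry, and then invoke ``a small augmentation lemma'' to transfer the obstruction back to $S$. No such lemma can exist as a general principle: non-divisibility is \emph{not} preserved under enlarging the set, since every $S$ embeds in a factor-closed set $\bar S$ for which $(\bar S^e)\mid[\bar S^e]$ always holds. The paper never leaves $S$; it picks the \emph{largest} $x_m$ violating $\mathcal{C}$ and a carefully constructed $x_k$ inside $S$ (in Case 1 this is $\gcd(Y_{L\setminus W})$ for a set $W$ built from a minimality argument on the sets $Y_L(i)$, not merely a pair of witnesses), and then chooses $x_s$ maximal so that all correction terms $c_{sk}g(l,k)$ are integers. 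Moreover, the non-integrality is not obtained $p$-adically --- controlling the prime factorization of $\alpha_{e,S}(x_m)$ is intractable --- but by trapping $0<g(k,m)<1$. That requires proving both the numerator and $\alpha_{e,S}(x_m)$ minus the numerator are strictly positive, and the key new ingredient is the positivity lemma: for any proper divisors $y_1,\dots,y_k$ of $x$ one has $x^e+\sum_{\emptyset\neq I\subset K}(-1)^{|I|}\gcd(Y_I)^e>0$, deduced from the positive-definiteness of power GCD matrices. This lemma, and the trapping argument it enables, is the idea your proposal is missing.
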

	
	It is evident that if Conjecture \ref{C3} holds true, then both Conjecture \ref{C1} and Conjecture \ref{C2} must also be valid. It is important to note that when employing the methods outlined in \cite{ZCH22JCTA} to investigate Conjecture \ref{C3}, one may encounter two major obstacles.
	The first obstacle arises in  confirming  the necessity of  Conjecture \ref{C3}.  For any two elements \( x_k \) and \( x_m \) from the set \( S \), it proves to be difficult to ascertain the positivity of the algebraic sum of all least common multiples of \( x_k \) and \( y \), where \( y \) varies over the intersection of  \( S \) and the set of all divisors of \( x_m \).  	
	The second obstacle involves characterizing \(\gcd(Y)\) along with its greatest type divisors for any given subset \(Y\) of \(G_S(x)\), where \(x \in S\) satisfying condition \(\mathcal{C}\) and with \(|G_S(x)| \geq 4\). This characterization is crucial for establishing the sufficiency of Conjecture \ref{C3}.
	
	In this paper, we focus on Conjecture \ref{C3}. By applying insights from some results by Bourque and Ligh \cite{BL93LMA}, we prove that for any given positive integer \( x \) and an arbitrary set \( Y \) of proper divisors of \( x \), the alternating sum of \( \gcd(Z) \) is consistently greater than zero, where \( Z \) ranges over the subsets of \( Y \cup \{x\} \). This overcomes the first obstacle. Furthermore, by utilizing subset inclusion to explore the divisibility structure of gcd-closed sets through a lattice-theoretic perspective, we tackle the second obstacle. Consequently, we prove that Conjecture \ref{C3} is true, thereby completely resolving Problem \ref{P1}.
	
	This paper is organized as follows. In Section 2, we introduce some notations and preliminary lemmas that are necessary for the proof of our main result. In Section 3, we present several novel lemmas concerning the divisibility structure of elements  of  gcd-closed set and provide the proof of Conjecture \ref{C3}. The final section is dedicated to remarks that illustrate the relationship between the condition $\mathcal{C}$ and Boolean algebra, as well as an open problem.

	\section{Preliminaries}
	
	In this section, we introduce several notations and lemmas needed in the proof of our main result. 
	
	\subsection{Notations}
	
	For any given integer $t\ge 2$ and any $t$ positive integers $x_1,\ldots,x_t$, we denote the greatest common divisor of \( x_1, \ldots, x_t \) by \( (x_1, \ldots, x_t) \), and the least common multiple of \( x_1, \ldots, x_{t} \) by \( [x_1, \ldots, x_t] \).	
	In this paper, we will utilize the inclusion of subsets from the set of greatest-type divisors to investigate the divisibility structure of gcd-closed set $S$. We need to generalize the definition of the greatest common divisor and the least common multiple over positive integer sets. 
	
	Given any set $A$ of positive integers, we denote by $\gcd(A)$ and $\operatorname{lcm}(A)$  the greatest common divisor and the least common multiple of all the elements in $A$, respectively. For any  given sets $A$ and $B$ of  positive integers, One can easily verify the following properties. 
	
	\begin{itemize}
		\item[(i).] $\gcd(A\cup B)=\left(\gcd(A),\gcd(B)\right)$. In particular, it holds that $\gcd(A\cup B)=\gcd(A)$ if $B\subset A$;
		\item[(ii).]  If $B\subset C\subset A$ and $\gcd(A)=\gcd(B)$, then $\gcd(A)=\gcd(C)=\gcd(B)$;
		\item[(iii).] $[\gcd(A), \gcd( B)]=\gcd(B)$ if $B\subset A$;
		\item[(iv).]  $[x,(y,z)]=\big([x,y],[x,z]\big)$ holds for any positive integers $x,y$ and $z$.   Furthermore, $$[x,\gcd(A)]=\gcd\bigg(\bigcup\limits_{y\in A}\{[x,y]\}\bigg).$$
	\end{itemize}
	It is evident that properties (i)-(iv) also apply to the multiset \( A \), which includes repeated elements.	
	For any given finite index set $I=\{i_1,\ldots,i_t\}\subset \mathbb{{Z}^+}$, we denote a finite sequence $Y_I$  by
	$$Y_I:=\{y_{i_j}\}_{j=1}^t.$$ 
	If \( y_{i_1}, \ldots, y_{i_t} \) are distinct positive integers, then \( Y_I \) can be interpreted as a set. If there are repeated elements among $y_{i_1},\ldots,y_{i_t}$, then  $Y_I$ can be regarded as a multiset. Consequently, the definition of $\gcd(\cdot)$ can be extended to the finite sequence $Y_I$.	
	
	For any nonempty set \( A \) of positive integers, the proof of our main result necessitates the identification of the minimal gcd-closed set that contains \( A \). Let 
	$$\left<A\right>:=\bigcup_{B\subset A,B\neq \emptyset}\{\gcd(B )\}.$$
	In other words, if $A=\{z_1,\ldots,z_t\}$, then by the definition we have
	$$\left<A\right>=\{(z_{i_1},\ldots, z_{i_k})~|~1\leq k \leq t,~  1\leq i_1< \ldots< i_k\leq t\}.$$
	Clearly, the set $\left<A\right>$	is  the minimal gcd-closed set that contains \( A \).
	
	Throughout this section, we always assume that $S=\{x_1,\ldots,x_n\}$ is a gcd-closed set and $e$ is a given positive integer. 
	The following notations are crucial for characterizing the divisibility of power LCM matrices by power GCD matrices on gcd-closed set. For any given integer $x\in S$, we define 
	\begin{equation}\label{2}
		\alpha _{e,S}\left(x\right)
		:=\sum\limits_{d|x\atop d\nmid y,~y\in S,~ y<x} (\xi _e*\mu )(d).
	\end{equation}
	Furthermore, for all given integers $k$ and $m$ with $1\leq k,m\leq n$, we denote the function \( g(k,m) \) as follows:
	\begin{equation}\label{3}
		g\left( k,m \right) :=\frac{1}{\alpha _{e,S}\left( x_m \right)}\sum_{x_r|x_m}{c_{rm}\left[ x_k,x_r \right] ^e},
	\end{equation}
	where 
	\begin{align}\label{1}c_{ij}:=\sum_{\begin{subarray}{c}dx_i|x_j \\dx_i \nmid x_t, ~x_t<x_j
		\end{subarray}}\mu (d).
	\end{align}
	
	The following three sets, introduced by Zhao, Chen, and Hong \cite{ZCH22JCTA}, enhance our understanding of the divisibility structure inherent in gcd-closed sets that satisfy condition $\mathcal{C}$. For any $a,b\in S$, with $a|b$ and $a<b$, we denote three sets $A_S\left(a,b\right)$, $B_S\left(a,b\right)$ and $C_S\left(a,b\right)$ as follows:
	$$A_S\left(a,b\right):=\left\{u\in S:a|u|b,u\ne a\right\},$$
	$$B_S\left(a,b\right):=A_S\left(a,b\right)\cap G_S\left(b\right)$$
	and
	$$C_S\left(a,b\right):=A_S\left(a,b\right)\cup\left\{a\right\}.$$
	\subsection{Preliminary lemmas }
	
	First of all, we revisit the inverse of a GCD matrix on a gcd-closed set, as established by Bourque and Ligh \cite{BL93LMA}.

	\begin{lemma}\label{lem1}  \cite{BL93LMA}
		Let $S=\{x_1,\ldots,x_n\}$ be a gcd-closed set.
		Then the inverse of the power GCD matrix
		$(S^e)$ on $S$ is the matrix $W=(w_{ij})$, where
		$$
		w_{ij}=\sum_{\begin{subarray}{c} x_i\mid x_k \\ x_j\mid x_k
		\end{subarray}} {c_{ik}c_{jk} \over \alpha_{e,S}(x_k)}.
		$$
	\end{lemma}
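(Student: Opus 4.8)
The plan is to establish an $LDL^{\mathsf T}$-type factorization of the power GCD matrix and then invert each factor separately. Concretely, I would introduce the divisibility incidence matrix $E=(\epsilon_{ik})$ of $S$, setting $\epsilon_{ik}=1$ when $x_k\mid x_i$ and $\epsilon_{ik}=0$ otherwise, together with the diagonal matrix $D=\mathrm{diag}\big(\alpha_{e,S}(x_1),\dots,\alpha_{e,S}(x_n)\big)$. The whole proof then reduces to two identities: the factorization $(S^e)=E\,D\,E^{\mathsf T}$, and the inversion formula $E^{-1}=C^{\mathsf T}$ for the matrix $C=(c_{ij})$.

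First I would prove the factorization. Möbius inversion of the Dirichlet identity $\xi_e=(\xi_e*\mu)*\mathbf 1$ gives $\xi_e(m)=\sum_{d\mid m}(\xi_e*\mu)(d)$, and evaluating at $m=(x_i,x_j)$ yields $(x_i,x_j)^e=\sum_{d\mid x_i,\ d\mid x_j}(\xi_e*\mu)(d)$. The key use of the gcd-closed hypothesis is that for every integer $d$ dividing some element of $S$, the set $\{x\in S:d\mid x\}$ is closed under taking gcd's, hence has a unique minimal element $m_S(d)$. Grouping the divisors $d$ according to $m_S(d)$, and noting that ``$d\mid x_i$ with $m_S(d)=x_k$'' is equivalent to ``$x_k\mid x_i$ together with $d\mid x_k$ and $m_S(d)=x_k$'' (a short gcd-closed argument, since $(x_k,x_i)\in S$ is divisible by $d$ and at most $x_k$, so equals $x_k$), the double sum collapses to
\[
(x_i,x_j)^e=\sum_{x_k\in S}\epsilon_{ik}\,\epsilon_{jk}\!\!\sum_{\substack{d\mid x_k\\ d\nmid y,\ y\in S,\ y<x_k}}\!\!(\xi_e*\mu)(d)=\sum_{x_k\in S}\epsilon_{ik}\,\epsilon_{jk}\,\alpha_{e,S}(x_k),
\]
which is exactly the $(i,j)$ entry of $E\,D\,E^{\mathsf T}$.

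Next I would invert the factors. Reordering $S$ along a linear extension of divisibility makes $E$ lower triangular with unit diagonal, hence invertible over $\mathbb Z$; and each $\alpha_{e,S}(x_k)$ is a positive integer (the term $d=x_k$ occurs and $(\xi_e*\mu)(d)>0$ for every $d$), so $D$ is invertible. It remains to show $E^{-1}=C^{\mathsf T}$, i.e. $\sum_{k:\,x_k\mid x_i}c_{lk}=\delta_{il}$. Substituting the definition of $c_{lk}$ and reading its constraints $dx_l\mid x_k$, $dx_l\nmid y$ for $y<x_k$ as ``$x_k=m_S(dx_l)$,'' the sum over $k$ reindexes to a sum over $d$; using $m_S(dx_l)\mid x_i\iff dx_l\mid x_i$ one obtains $\sum_{k:\,x_k\mid x_i}c_{lk}=\sum_{d:\,dx_l\mid x_i}\mu(d)=\sum_{d\mid x_i/x_l}\mu(d)=\delta_{il}$ (the sum being empty, hence $0=\delta_{il}$, when $x_l\nmid x_i$). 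Assembling, $(S^e)^{-1}=(E^{\mathsf T})^{-1}D^{-1}E^{-1}=C\,D^{-1}C^{\mathsf T}$, whose $(i,j)$ entry is $\sum_k c_{ik}c_{jk}/\alpha_{e,S}(x_k)$; since $c_{ik}\neq0$ forces $x_i\mid x_k$ and $c_{jk}\neq0$ forces $x_j\mid x_k$, this is precisely $w_{ij}$.

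I expect the main obstacle to be the two reindexing arguments, both of which rest on the ``minimal multiple'' map $m_S(\cdot)$: verifying that gcd-closedness makes $m_S(d)$ well defined, and that $m_S(d)\mid x_i$ is equivalent to $d\mid x_i$, is what drives both the collapse in the factorization step and the telescoping in the inversion identity. Getting these equivalences exactly right, including the degenerate cases where $dx_l$ divides no element of $S$, is the delicate part of the argument.
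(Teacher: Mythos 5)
Your proof is correct, and it is essentially the standard Bourque--Ligh argument that the paper itself relies on: Lemma \ref{lem1} is cited from \cite{BL93LMA} without proof, but the factorization $(S^e)=E\varDelta E^{\mathsf T}$ you start from is exactly the one the paper invokes in its proof of Lemma \ref{lem3}, and your identity $E^{-1}=C^{\mathsf T}$ (via $\sum_{dx_l\mid x_i}\mu(d)=\delta_{il}$) is the same telescoping the paper uses as equation (\ref{e2.3}). All the delicate points you flag --- well-definedness of the minimal multiple $m_S(d)$ via gcd-closedness, the equivalence $m_S(d)\mid x_i\iff d\mid x_i$, and the vacuous cases --- are handled correctly.
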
	
	
	With the assistance of Lemma 1 and Example 1 presented in \cite{BL93LMA}, we derive the following result.
	
	\begin{lemma}\label{lem3}
		If $S=\{x_1,\ldots,x_n\}$ is a gcd-closed set, then $\alpha _{e,S}\left( x_i \right)>0$ for any  $1\leq i\leq n$.
	\end{lemma}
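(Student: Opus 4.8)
The plan is to exploit the fact that every summand in the definition \eqref{2} of $\alpha_{e,S}(x_i)$ is a value of the arithmetic function $\xi_e*\mu$, and that this function is strictly positive. Once positivity of each summand is known, the sum is automatically positive provided its index set is nonempty, so the entire argument reduces to exhibiting a single admissible divisor $d$.

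First I would record that $\xi_e*\mu$ is positive on all positive integers, which is exactly the content behind Example 1 of \cite{BL93LMA}. Since $\xi_e$ (with $\xi_e(m)=m^e$) and the M\"obius function $\mu$ are both multiplicative, their Dirichlet convolution $\xi_e*\mu$ is multiplicative; evaluating it on a prime power gives
$$(\xi_e*\mu)(p^a)=\sum_{i=0}^{a}p^{ie}\mu(p^{a-i})=p^{ae}-p^{(a-1)e}=p^{(a-1)e}\big(p^e-1\big)>0$$
for every prime $p$ and every integer $a\ge 1$ (recall $e\ge 1$). By multiplicativity it follows that $(\xi_e*\mu)(d)>0$ for all $d\ge 1$.

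Next I would observe that the index set of the sum \eqref{2} always contains $d=x_i$. Indeed $x_i\mid x_i$, and if $y\in S$ with $y<x_i$ then $x_i\nmid y$, since $x_i\mid y$ would force $x_i\le y$. Hence $d=x_i$ satisfies the defining constraint ``$d\mid x_i$ and $d\nmid y$ for all $y\in S$ with $y<x_i$''. Therefore $\alpha_{e,S}(x_i)$ is a sum of strictly positive terms over a nonempty index set, and in particular $\alpha_{e,S}(x_i)\ge (\xi_e*\mu)(x_i)>0$, which is the assertion.

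The only genuine content is the positivity of $\xi_e*\mu$; after that the conclusion is immediate, because the top divisor $d=x_i$ can never divide a strictly smaller element of $S$. This is also consistent with Lemma \ref{lem1}, whose inverse formula divides by $\alpha_{e,S}(x_k)$ and thus tacitly requires these quantities to be nonzero. I anticipate no serious obstacle beyond correctly identifying $\xi_e*\mu$ as the (positive) Jordan-totient-type function and confirming the elementary fact that a strictly smaller element of $S$ cannot be a multiple of $x_i$.
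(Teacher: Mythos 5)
Your proof is correct, but it takes a genuinely different route from the paper. The paper does not argue termwise: it invokes the Bourque--Ligh factorization $(S^e)=E\varDelta E^{T}$ with $\varDelta=\mathrm{diag}(\alpha_{e,S}(x_1),\ldots,\alpha_{e,S}(x_n))$ and $E$ invertible, notes that $(S^e)$ is positive definite (Example 1 of \cite{BL93LMA}), and concludes by congruence that $\varDelta$ is positive definite, hence each diagonal entry is positive. You instead observe that every summand $(\xi_e*\mu)(d)=J_e(d)$ in the definition (\ref{2}) is strictly positive (correct: $\xi_e*\mu$ is multiplicative and equals $p^{(a-1)e}(p^e-1)>0$ on prime powers, with $e\ge 1$ as assumed throughout Section 2), and that the index set is nonempty because $d=x_i$ always satisfies the constraint ($x_i\mid x_i$, and $x_i$ cannot divide any strictly smaller $y\in S$). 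Both arguments are sound. Yours is more elementary and self-contained --- it does not need the matrix factorization or positive definiteness at all, and it even yields the quantitative lower bound $\alpha_{e,S}(x_i)\ge J_e(x_i)$; note also that it does not use the gcd-closed hypothesis, so it proves slightly more than stated. The paper's argument is shorter on the page because it outsources the work to two cited facts from \cite{BL93LMA}, and it fits the paper's broader theme of reading off information about $\alpha_{e,S}$ from the structure of $(S^e)$; but the positive definiteness it cites is itself usually established by exactly the kind of termwise positivity of $\xi_e*\mu$ that you use directly.
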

	\noindent{\bf Proof.}
	Since $S$ is a gcd-closed set, by Lemma 1 of $\cite{BL93LMA}$, we know that	$$	(S^e)=E\varDelta E^T,$$ where $\varDelta=diag(\alpha_{e, S}(x_1),\alpha_{e, S}(x_2),\ldots,\alpha_{e, S}(x_n))$ and $E=(e_{ij})$ are $n\times n$ matrices with $e_{ij}$ defined as follows:
	$$e_{ij}=\begin{cases}
		1& \text{ if } x_j|x_i\\
		0& \text{ otherwise }\\
	\end{cases}.$$
	Evidently, the matrix \( E \) is invertible. Consequently, the matrix \( (S^e) \) is congruent to \( \varDelta \). Referring to Example 1 in \cite{BL93LMA}, we know that \( (S^e) \) is positive definite. This implies that \( \varDelta \) is also positive definite. Then  $\alpha _{e,S}\left( x_i \right)>0$ for any $1\le i\le n$.
	\hfill\ensuremath{\mbox{$\Box$}}
	\\

	The following lemma, established by Hong \cite{H04JA}, provides an efficient method for evaluating the denominator $\alpha_{e,S}(x_m)$ of $g(k,m)$.
	
	\begin{lemma}\cite{H04JA}\label{lem2}
		Let $S$ be a gcd-closed set, $L=\{1,\ldots,l\}$ be an index set,  $x\in S$ and 
		$G_S\left(x\right)=Y_L$
		be the set of the
		greatest-type divisors of $x$ 	in $S$. Then
		$$	\alpha _{e,S}\left(x\right)
		=x^e+\sum_{I\subset L,I\neq \emptyset}(-1)^{|I|}
		\big(x,\gcd(Y_I)\big)^e.$$
	\end{lemma}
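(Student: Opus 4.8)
The plan is to rewrite the defining sum \eqref{2} so that the divisibility constraint ``$d\nmid y$ for all $y\in S$ with $y<x$'' is replaced by the finite condition ``$d$ divides none of the greatest-type divisors of $x$'', and then to evaluate the resulting expression by inclusion--exclusion. Two elementary arithmetic facts underlie the whole argument. Writing $\xi_e(n)=n^e$, Dirichlet convolution gives, for every positive integer $m$,
\[
\sum_{d\mid m}(\xi_e*\mu)(d)=(\xi_e*\mu*\one)(m)=\xi_e(m)=m^e,
\]
because $\mu*\one$ is the identity of Dirichlet convolution. Thus summing $(\xi_e*\mu)(d)$ over all divisors of any integer $m$ collapses to $m^e$, and this is the only property of the summand I will need.

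The key step, and the one that genuinely uses that $S$ is gcd-closed, is the following equivalence: for a divisor $d$ of $x$, there exists $y\in S$ with $y<x$ and $d\mid y$ if and only if $d$ divides some greatest-type divisor $y_i$ of $x$. The ``if'' direction is immediate since each $y_i\in S$ satisfies $y_i<x$. For the ``only if'' direction, suppose $d\mid x$ and $d\mid y$ for some $y\in S$ with $y<x$. Then $d\mid(x,y)$, and because $S$ is gcd-closed we have $(x,y)\in S$; moreover $(x,y)\mid x$ and $(x,y)\le y<x$, so $(x,y)$ is a proper divisor of $x$ lying in $S$. Choosing a divisibility-maximal element $y_i\in S$ with $(x,y)\mid y_i\mid x$ and $y_i\ne x$ produces a greatest-type divisor of $x$ satisfying $d\mid(x,y)\mid y_i$. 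Consequently the condition defining \eqref{2} is equivalent to ``$d\mid x$ and $d\nmid y_i$ for every $i\in L$''.

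It then remains to assemble the pieces. Setting $A_i=\{d:d\mid y_i\}$ and noting that $y_i\mid x$ forces every common divisor of $\{y_i:i\in I\}$ to divide $x$, I would apply inclusion--exclusion over $L$ to write
\[
\alpha_{e,S}(x)=\sum_{d\mid x}(\xi_e*\mu)(d)-\sum_{\emptyset\ne I\subset L}(-1)^{|I|+1}\sum_{d\mid\gcd(Y_I)}(\xi_e*\mu)(d),
\]
where $\bigcap_{i\in I}A_i$ is exactly the set of divisors of $\gcd(Y_I)$. Applying the collapsing identity to each inner sum, with $\sum_{d\mid x}(\xi_e*\mu)(d)=x^e$ and $\sum_{d\mid\gcd(Y_I)}(\xi_e*\mu)(d)=\gcd(Y_I)^e=(x,\gcd(Y_I))^e$ (the last equality because $\gcd(Y_I)\mid x$), yields precisely
\[
\alpha_{e,S}(x)=x^e+\sum_{\emptyset\ne I\subset L}(-1)^{|I|}\big(x,\gcd(Y_I)\big)^e.
\]
I expect the combinatorial reduction in the middle paragraph to be the only delicate point: everything afterwards is formal inclusion--exclusion combined with the convolution identity, whereas identifying the greatest-type divisors as the exact obstruction set for divisors of $x$ is where the gcd-closed hypothesis must be invoked.
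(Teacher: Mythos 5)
Your proof is correct. The paper does not prove this lemma but cites it from \cite{H04JA}; your argument --- using gcd-closedness to show that a divisor $d$ of $x$ divides some $y\in S$ with $y<x$ exactly when it divides some greatest-type divisor, then applying inclusion--exclusion together with the collapse $\sum_{d\mid m}(\xi_e*\mu)(d)=m^e$ and the observation $(x,\gcd(Y_I))=\gcd(Y_I)$ --- is the standard proof and matches the one in the cited source.
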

	
	Altinişik, Yildiz and Keskin \cite{AYK17LAA} presented a formula for calculating \( c_{rm} \) in Lemma \ref{lem5}.
	
	\begin{lemma}\label{lem5}  \cite{AYK17LAA}
		Let $S=\{x_1,\ldots,x_n\}$ be a gcd-closed set  and $1\le r, m\le n$. If   $G_S\left(x_m\right)=Y_L$ with $L=\{1,\ldots,l\}$, then
		$$c_{rm}=\sum_{dx_r|x_m}\mu\left( d \right) +\sum_{I\subset L,I\neq \emptyset}(-1)^{|I|}\sum_{dx_r|\gcd(Y_I)}\mu\left( d \right).$$
	\end{lemma}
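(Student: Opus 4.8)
The plan is to read the claimed identity as an inclusion-exclusion expansion of the defining sum for $c_{rm}$. By \eqref{1}, $c_{rm} = \sum_d \mu(d)$, where $d$ ranges over those positive integers satisfying $dx_r \mid x_m$ together with the exclusion condition $dx_r \nmid x_t$ for every $x_t \in S$ with $x_t < x_m$. The first task is to replace this exclusion condition, which quantifies over all smaller elements of $S$, by one that quantifies only over the finitely many, structurally simpler greatest-type divisors $y_1,\ldots,y_l$ recorded by $G_S(x_m) = Y_L$.

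The crucial reduction, and the step I expect to be the main obstacle, is the equivalence: for a positive integer $m_0 := dx_r$ with $m_0 \mid x_m$, one has $m_0 \mid x_t$ for some $x_t \in S$ with $x_t < x_m$ if and only if $m_0 \mid y_i$ for some $i \in L$. The ``if'' direction is immediate, since each $y_i$ is a proper divisor of $x_m$ lying in $S$. For the ``only if'' direction I would invoke gcd-closedness: if $m_0 \mid x_m$ and $m_0 \mid x_t$ with $x_t < x_m$, then $m_0 \mid (x_t, x_m)$, and $(x_t,x_m) \in S$ is a proper divisor of $x_m$ because $(x_t,x_m) \le x_t < x_m$. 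Any proper divisor of $x_m$ in $S$ must sit below some greatest-type divisor: starting from $(x_t,x_m)$ and repeatedly passing to a strictly larger element of $S$ that still divides $x_m$, finiteness of $S$ forces the process to terminate at some $y_i \in G_S(x_m)$ with $(x_t,x_m) \mid y_i$, whence $m_0 \mid y_i$. Thus, under the standing hypothesis $dx_r \mid x_m$, the original exclusion condition is equivalent to $dx_r \nmid y_i$ for all $i \in L$.

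With this reduction in hand, the remaining step is a routine inclusion-exclusion over the index set $L$. Working inside the universe $P := \{d : dx_r \mid x_m\}$ and setting $A_i := \{d \in P : dx_r \mid y_i\}$ (note $A_i \subset P$ since $y_i \mid x_m$), the defining sum becomes $\sum_{d \in P \setminus \bigcup_i A_i} \mu(d)$. Expanding the complement by inclusion-exclusion yields $\sum_{I \subset L} (-1)^{|I|} \sum_{d \in \bigcap_{i \in I} A_i} \mu(d)$, whose $I = \emptyset$ term is the empty intersection $P$ and so contributes $\sum_{dx_r \mid x_m} \mu(d)$. Finally I would simplify each nonempty intersection using the elementary fact that $dx_r \mid y_i$ for all $i \in I$ is equivalent to $dx_r \mid \gcd(Y_I)$, a direct consequence of properties (i)-(ii) of the generalized gcd. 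This turns the $I \neq \emptyset$ terms into $\sum_{dx_r \mid \gcd(Y_I)} \mu(d)$ and produces exactly the stated formula. No estimates are required; all the arithmetic content is concentrated in the gcd-closed reduction of the second paragraph.
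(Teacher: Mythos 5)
Your argument is correct and complete: the reduction of the exclusion condition to the greatest-type divisors via gcd-closedness (every proper divisor of $x_m$ in $S$ lies below some $y_i$), followed by inclusion--exclusion over $L$ and the identification $\bigcap_{i\in I}A_i=\{d: dx_r\mid\gcd(Y_I)\}$, is exactly the standard route. The paper itself states this lemma without proof, quoting it from \cite{AYK17LAA}, and your derivation matches the argument given there (and the analogous proof of Lemma \ref{lem2}), so there is nothing to add.
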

	
	Consequently, we get a formula for the numerator of $g(k,m)$ as follows.

	\begin{lemma}\label{lem6}
		Let $S=\{x_1,\ldots,x_n\}$ be a gcd-closed set and $1\le k, m\le n$.  If   $G_S\left(x_m\right)=Y_L$ with $L=\{1,\ldots,l\}$,  then
		\begin{equation*}
			\sum_{x_r|x_m}{c_{rm}\left[ x_k,x_r \right] ^e}=\left[ x_k,x_m \right] ^e+\sum_{I\subset L,I\neq \emptyset}(-1)^{|I|}\big[ x_k,\gcd(Y_I)\big] ^e.
		\end{equation*}	
		
	\end{lemma}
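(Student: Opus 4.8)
The plan is to insert the formula for $c_{rm}$ supplied by Lemma \ref{lem5} directly into the sum on the left-hand side and then interchange the order of summation, holding the weight $[x_k,x_r]^e$ fixed while the inner Möbius sums over $d$ are evaluated. Lemma \ref{lem5} writes $c_{rm}$ as the principal term $\sum_{dx_r\mid x_m}\mu(d)$ plus the alternating correction $\sum_{I\subset L,\,I\ne\emptyset}(-1)^{|I|}\sum_{dx_r\mid \gcd(Y_I)}\mu(d)$. After substitution the left-hand side therefore splits as a principal sum $\sum_{x_r\mid x_m}\big(\sum_{dx_r\mid x_m}\mu(d)\big)[x_k,x_r]^e$ together with an alternating sum indexed by the nonempty subsets $I\subset L$, and the whole argument reduces to evaluating each piece by means of the elementary identity $\sum_{d\mid N}\mu(d)=1$ if $N=1$ and $0$ otherwise.

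First I would treat the principal sum. For each fixed index $x_r$ with $x_r\mid x_m$, the inner sum runs over all positive integers $d$ with $d\mid(x_m/x_r)$, so by the identity above it equals $1$ exactly when $x_r=x_m$ and vanishes otherwise. Hence the principal sum collapses to the single surviving term $[x_k,x_m]^e$, which accounts for the leading term on the right-hand side.

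Next, for each nonempty $I\subset L$ I would evaluate $\sum_{x_r\mid x_m}\big(\sum_{dx_r\mid \gcd(Y_I)}\mu(d)\big)[x_k,x_r]^e$ in the same fashion: the inner Möbius sum is nonzero only when $x_r\mid\gcd(Y_I)$, in which case it equals $1$ precisely for $x_r=\gcd(Y_I)$. The one point that must be checked — and the only place where the hypotheses genuinely enter — is that $\gcd(Y_I)$ is a legitimate index in the outer sum, i.e. that it is an element of $S$ dividing $x_m$. This is exactly where gcd-closedness is used: since $Y_I\subset G_S(x_m)\subset S$ and $S$ is gcd-closed, we have $\gcd(Y_I)\in S$, and as every element of $Y_I$ divides $x_m$ it follows that $\gcd(Y_I)\mid x_m$. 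Consequently this sum reduces to the single term $[x_k,\gcd(Y_I)]^e$, and attaching the signs $(-1)^{|I|}$ and summing over all nonempty $I\subset L$ yields precisely the alternating sum on the right-hand side; combining the two pieces gives the stated identity.

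The argument is essentially bookkeeping once Lemma \ref{lem5} is in hand, so I do not expect a genuine obstacle. The only subtlety worth flagging is the interchange of the finite summations together with the verification that $\gcd(Y_I)$ actually occurs among the indices $x_r$ of the outer sum — a fact that rests on the gcd-closedness of $S$ and on $Y_I\subset G_S(x_m)$ — after which the collapse of the Möbius sums is immediate.
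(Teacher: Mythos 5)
Your proposal is correct and follows essentially the same route as the paper: substitute the formula of Lemma \ref{lem5} for $c_{rm}$, interchange the finite sums, and collapse each inner M\"obius sum via $\sum_{dx\mid y}\mu(d)=1$ if $x=y$ and $0$ otherwise. You even make explicit a point the paper leaves implicit, namely that gcd-closedness guarantees $\gcd(Y_I)\in S$ and $\gcd(Y_I)\mid x_m$, so that the surviving index really occurs in the outer sum.
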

	\begin{proof}
		We begin by observing that for any integers \( x \) and \( y \) with \( x \) divides \( y \),   
		\begin{align}\label{e2.3}
			\sum_{dx|y}\mu(d)=\sum_{d|\frac{y}{x}}\mu(d)=\left\{
			\begin{array}{cc}
				1 &~~ x=y\\
				0 &~~\text{ otherwise }\\
			\end{array}
			\right..
		\end{align}
		It 	then follows from Lemma \ref{lem5} and (\ref{e2.3}) that 					
		\begin{align}
			\sum_{x_r|x_m}{c_{rm}\left[ x_k,x_r \right] ^e}&=\sum_{x_r|x_m}{\bigg(\sum_{dx_r|x_m}\mu\left( d \right) +\sum_{I\subset L,I\neq \emptyset}(-1)^{|I|}\sum_{dx_r|\gcd(Y_I)}\mu\left( d \right)\bigg) \left[ x_k,x_r \right] ^e}\nonumber\\
			&=\left[ x_k,x_m \right] ^e+\sum_{I\subset L,I\neq \emptyset}(-1)^{|I|}\sum_{x_r|x_m}\left[ x_k,x_r \right] ^e\sum_{dx_r|\gcd(Y_I)}\mu( d )\nonumber\\
			&=\left[ x_k,x_m \right] ^e+\sum_{I\subset L,I\neq \emptyset}(-1)^{|I|}\big[ x_k,\gcd(Y_I)\big] ^e,\nonumber
		\end{align}	
		which means that  Lemma \ref{lem6} is true.   \hfill\ensuremath{\mbox{$\Box$}}
	\end{proof}
	
	We now present a key lemma established by Zhao, Chen and Hong \cite{ZCH22JCTA}, which characterizes the properties of gcd-closed sets that satisfy condition $\mathcal{C}$.
	
	\begin{lemma}\label{lem7} \cite{ZCH22JCTA}
		Let $S$ be a gcd-closed set, $x\in S$ satisfy $|G_S\left(x\right)|\geq 2$, and $y\in G_S\left(x\right)$. Let $z\mid x$ and $z\nmid y$. If the set  $A_S\left(z,x\right)$ satisfies the condition  $\mathcal{C}$, then  $|G_S\left(u\right)|\geq 2$, and $\left[y,u\right]=\left[y,z\right]=x$ for any $u\in A_S\left(z,x\right).$
	\end{lemma}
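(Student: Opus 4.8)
My strategy is induction on $|A_S(z,x)|$, establishing the three assertions $|G_S(u)|\ge 2$, $[y,u]=x$ and $[y,z]=x$ simultaneously. The base case is $|A_S(z,x)|=0$, which means $z=x$; here the first two assertions are vacuous while $[y,z]=[y,x]=x$ since $y\mid x$. Throughout I treat $z$ as an element of $S$ (as the notation $A_S(z,x)$ presupposes, following the definition of these sets), so that a greatest-type divisor of $x$ lying above $z$ always exists.

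For the inductive step assume $z<x$. Since $z\in S$, $z\mid x$ and $z<x$, I pick $u'\in G_S(x)$ with $z\mid u'$, namely a maximal element of $\{s\in S:z\mid s\mid x,\ s<x\}$ (nonempty, as it contains $z$). Because $z\mid u'$ while $z\nmid y$, we have $u'\ne y$, so $y,u'$ are distinct greatest-type divisors of $x$; applying condition $\mathcal{C}$ to $x$ (legitimate since $x\in A_S(z,x)$ and $|G_S(x)|\ge 2$) yields $[y,u']=x$ together with $v:=(y,u')\in G_S(u')$. The decisive observation is that the triple $(u',v,z)$ reproduces the hypotheses of the lemma on a strictly smaller set $A_S(z,u')\subsetneq A_S(z,x)$ (strict since $x\in A_S(z,x)\setminus A_S(z,u')$): indeed $v\in G_S(u')$, $z\mid u'$, and $z\nmid v$ (because $v\mid y$ and $z\nmid y$), while $A_S(z,u')\subseteq A_S(z,x)$ inherits condition $\mathcal{C}$, this being a property of individual elements of $S$.

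The one hypothesis that still needs checking is $|G_S(u')|\ge 2$, and for this I would insert a short sub-lemma: if $w\in S$ has a greatest-type divisor $v$ with $z\mid w$, $z\ne w$ and $z\nmid v$, then $|G_S(w)|\ge 2$. Otherwise $G_S(w)=\{v\}$, which forces every element of $S$ properly dividing $w$ to divide $v$; in particular $z\mid v$ (as $z\in S$, $z\mid w$, $z\ne w$), contradicting $z\nmid v$. This applies to $u'$ whenever $z<u'$. The remaining possibility $z=u'$ means $z\in G_S(x)$, which is harmless: then $A_S(z,x)=\{x\}$, so only $u=x$ occurs, $|G_S(x)|\ge 2$ is given, and $[y,z]=x$ is a direct instance of condition $\mathcal{C}$ on $x$. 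Thus, outside this degenerate case, the induction hypothesis applies to $(u',v,z)$ and supplies $|G_S(w)|\ge 2$ and $[v,w]=u'$ for every $w\in A_S(z,u')$, as well as $[v,z]=u'$.

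Finally I lift these conclusions from $u'$ back to $x$. Given any $u\in A_S(z,x)$ with $u<x$, choosing $u'$ maximal above $u$ gives $z<u'$ and $u\in A_S(z,u')$, so the induction already yields $|G_S(u)|\ge 2$; moreover from $[v,u]=u'$ and $v\mid y$ one gets $u'\mid[y,u]$, and since also $y\mid[y,u]$ it follows that $x=[u',y]\mid[y,u]$, whence $[y,u]=x$ (using $[y,u]\mid x$). The identical computation with $z$ in place of $u$, starting from $[v,z]=u'$ for the $u'$ chosen above $z$, yields $[y,z]=x$; the cases $u=x$ are trivial. I expect the main obstacle to be exactly the bookkeeping of the reduced triple $(u',v,z)$, i.e. verifying that it satisfies \emph{every} hypothesis of the lemma, most delicately $|G_S(u')|\ge 2$ (handled by the sub-lemma) and the degenerate case $z=u'$, so that the induction is genuinely applicable; once that is secured, the lifting identities follow from the elementary fact that $a\mid c$ and $b\mid c$ force $[a,b]\mid c$.
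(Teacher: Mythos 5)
The paper states this lemma without proof, citing \cite{ZCH22JCTA}, so there is no in-paper argument to compare your proposal against; it has to be judged on its own merits, and it checks out. The induction on $|A_S(z,x)|$ is well-founded because $x\in A_S(z,x)\setminus A_S(z,u')$ makes the reduced set strictly smaller; a maximal element $u'$ of $\{s\in S:\ z\mid s\mid x,\ s<x\}$ (respectively of the elements above a given $u$) really is a greatest-type divisor of $x$ divisible by $z$, and it differs from $y$ since $z\nmid y$; condition $\mathcal{C}$ applied to $x$ then legitimately produces $v=(y,u')\in G_S(u')$ with $[y,u']=x$, and $z\nmid v$ because $v\mid y$. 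Your sub-lemma correctly rules out $|G_S(u')|=1$: if $G_S(w)=\{v\}$ then every element of $S$ properly dividing $w$ lies below some greatest-type divisor of $w$, hence divides $v$, and taking that element to be $z$ gives the contradiction. The transfer of condition $\mathcal{C}$ to $A_S(z,u')\subseteq A_S(z,x)$ is valid since the condition is elementwise, the degenerate case $z\in G_S(x)$ is handled correctly ($A_S(z,x)=\{x\}$ and $[y,z]=x$ follows directly from $\mathcal{C}$ on $x$), and the lifting chain $u'=[v,u]\mid[y,u]$, hence $x=[u',y]\mid[y,u]\mid x$, is sound; the standing assumption $z\in S$ is indeed forced by the paper's definition of $A_S(\cdot,\cdot)$ and holds in every application of the lemma. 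In short, the proposal is a correct, self-contained reconstruction of the cited result.
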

	
	The following two lemmas provide essential information regarding $(S^e) \mid [S^e]$ when $S$ is a gcd-closed set with $\max_{x \in S} \{|G_{S}(x)|\} \leq 3$.
	
	\begin{lemma}\label{lem2.8}\cite{ZCH22JCTA}
		Let $S=\left\{x_1,\ldots,x_n\right\}$ be a gcd-closed set, 	
		$1\leq k,m \leq n$ and $1\le |G_S(x_m)|\le 3$. If the set $C_S((x_k,x_m),x_m)$ satisfies the condition $\mathcal{C}$, then  $g\left(k,m\right)\in \mathbb{Z}$.
	\end{lemma}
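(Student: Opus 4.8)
The plan is to reduce the claimed integrality to a divisibility between two alternating sums and then to collapse the numerator using Lemma~\ref{lem7}. First I would rewrite the numerator of $g(k,m)$ by Lemma~\ref{lem6}, so that with $G_S(x_m)=Y_L$ one has
\[
\sum_{x_r\mid x_m}c_{rm}[x_k,x_r]^e=[x_k,x_m]^e+\sum_{I\subset L,\,I\neq\emptyset}(-1)^{|I|}[x_k,\gcd(Y_I)]^e .
\]
Setting $a:=(x_k,x_m)$ and $b:=x_k/a\in\mathbb{Z}^+$, the elementary identity $(x_k,d)=(x_k,x_m,d)=(a,d)$, valid for every $d\mid x_m$, gives $[x_k,d]=b\,[a,d]$. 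Applying this to $d=x_m$ and to each $d=\gcd(Y_I)$ (all divisors of $x_m$) factors the numerator as $b^e N$ with
\[
N:=[a,x_m]^e+\sum_{I\subset L,\,I\neq\emptyset}(-1)^{|I|}[a,\gcd(Y_I)]^e .
\]
Since $\alpha_{e,S}(x_m)>0$ by Lemma~\ref{lem3}, it then suffices to show $\alpha_{e,S}(x_m)\mid N$.

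Next I would evaluate the lcm's $[a,\gcd(Y_I)]$. If $a=x_m$ every term equals $x_m^e$ and $N=x_m^e\sum_{I\subset L}(-1)^{|I|}=0$, so assume $a\neq x_m$. Then $a$ is a proper divisor of $x_m$ in $S$, hence divides some greatest-type divisor, and I would split $L=P\sqcup Q$ with $P:=\{i:a\mid y_i\}\neq\emptyset$. For $i\in P$ one has $[a,y_i]=y_i$, while for $i\in Q$ (so $a\nmid y_i$) I would apply Lemma~\ref{lem7} with $x=x_m$, $y=y_i$, $z=a$ (note $|G_S(x_m)|\ge 2$ here, since $P$ and $Q$ are both nonempty): as $A_S(a,x_m)$ satisfies condition $\mathcal{C}$, Lemma~\ref{lem7} yields $[a,y_i]=x_m$. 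Property~(iv) in the form $[a,\gcd(Y_I)]=\gcd_{i\in I}[a,y_i]$ then gives $[a,\gcd(Y_I)]=\gcd(Y_{I\cap P})$ when $I\cap P\neq\emptyset$ and $[a,\gcd(Y_I)]=x_m$ when $I\cap P=\emptyset$.

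Finally I would collapse $N$ by splitting the sum according to the value of $I\cap P$. The indices with $I\cap P=\emptyset$ contribute $x_m^e\sum_{I\subset Q}(-1)^{|I|}$, and those with $I\cap P\neq\emptyset$ contribute $\big(\sum_{\emptyset\neq J\subset P}(-1)^{|J|}\gcd(Y_J)^e\big)\big(\sum_{K\subset Q}(-1)^{|K|}\big)$. Since $\sum_{K\subset Q}(-1)^{|K|}=(1-1)^{|Q|}$, both contributions vanish when $Q\neq\emptyset$, forcing $N=0$; and when $Q=\emptyset$, i.e.\ $a\mid\gcd(Y_L)$, they add up to $x_m^e+\sum_{\emptyset\neq J\subset L}(-1)^{|J|}\gcd(Y_J)^e=\alpha_{e,S}(x_m)$ by Lemma~\ref{lem2}. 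In either case $\alpha_{e,S}(x_m)\mid N$, so $g(k,m)=b^eN/\alpha_{e,S}(x_m)\in\{0,b^e\}\subset\mathbb{Z}$. Because $|G_S(x_m)|\le 3$, only the configurations $|P|\in\{1,2,3\}$ arise, so this telescoping can equivalently be checked case by case.

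The hard part will be the single evaluation $[a,y_i]=x_m$ for $a\nmid y_i$: a priori $[a,y_i]$ is only known to be a multiple of the greatest-type divisor $y_i$ dividing $x_m$, and one must exclude its being a proper divisor of $x_m$ lying outside $S$. This is precisely what condition~$\mathcal{C}$ delivers through Lemma~\ref{lem7}, and it is the only place the hypothesis enters; the rest is the routine telescoping above. I would also verify carefully that $A_S(a,x_m)$ inherits condition~$\mathcal{C}$ from the hypothesis on $C_S(a,x_m)$ (the two sets differ only in the element $a$, which is the minimum of $C_S(a,x_m)$), a bookkeeping point that the smallness of $|G_S(x_m)|$ keeps under control.
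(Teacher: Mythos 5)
Your argument is correct, but note that the paper never proves Lemma \ref{lem2.8}: it is imported from \cite{ZCH22JCTA} as a black box, so the only in-house proof to compare with is that of the directly analogous Lemma \ref{lem3.3} (the case $|G_S(x_m)|\ge 4$). There the authors split into the cases $(x_k,x_m)\mid\gcd(Y_L)$ and $(x_k,x_m)\nmid\gcd(Y_L)$, and in the latter they fix a single index $l$ with $(x_k,x_m)\nmid y_l$ and telescope the alternating sum by pairing $I$ with $I\cup\{l\}$; this forces them to develop the inductive Lemma \ref{lem3.2} (and Lemma \ref{lem3.1}) about greatest-type divisors of the intermediate elements $\gcd(Y_I)$. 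You instead normalize $x_k$ to $a=(x_k,x_m)$ by extracting the factor $b^e$, evaluate each single $[a,y_i]$ (equal to $y_i$ on $P$ and, via Lemma \ref{lem7}, to $x_m$ on $Q$), and then let property (iv) compute every $[a,\gcd(Y_I)]$ at once, so the whole sum factors through $(1-1)^{|Q|}$ and collapses to $0$ or to $\alpha_{e,S}(x_m)$. This buys two things: no induction on $|I|$ is needed, and the bound $|G_S(x_m)|\le 3$ is never used, so the same computation simultaneously reproves Lemma \ref{lem3.3} while bypassing Lemmas \ref{lem3.1} and \ref{lem3.2}. The one delicate point you flag --- deducing that $A_S(a,x_m)$ satisfies condition $\mathcal{C}$ from the hypothesis on $C_S(a,x_m)$ before invoking Lemma \ref{lem7} --- is exactly the inference the paper itself makes in Case 2 of the proof of Lemma \ref{lem3.3} (since $a$ is the minimum of $C_S(a,x_m)$, adjoining it changes the greatest-type divisors only of the minimal elements of $A_S(a,x_m)$, for which the condition is vacuous), so your proof meets the paper's own standard there; the remaining steps are elementary identities about $\gcd$ and $\operatorname{lcm}$ and are all justified.
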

	
	\begin{lemma}\label{lem2.9}\cite{ZCH22JCTA}
		Let $S=\{x_1,\ldots,x_n\}$ be a gcd-closed set,  $x_m\in  S$ with $2\le |G_S(x_m)|\le 3$. If $x_m$ does not satisfies the condition $\mathcal{C}$,  then there exists an integer $1\le k\le n $ such that $g\left(k, m\right)\notin \mathbb{Z}$. 
	\end{lemma}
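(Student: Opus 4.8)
\noindent The plan is to make $g(k,m)$ completely explicit and then to exhibit a single index $k$ for which $0<g(k,m)<1$; this alone forces $g(k,m)\notin\mathbb Z$. Write $G_S(x_m)=\{y_1,\ldots,y_l\}=Y_L$ with $L=\{1,\ldots,l\}$ and $l=|G_S(x_m)|\in\{2,3\}$. Since every $y_i\mid x_m$ we have $(x_m,\gcd(Y_I))=\gcd(Y_I)$, so Lemma~\ref{lem2} gives
\[
\alpha_{e,S}(x_m)=x_m^e+\sum_{\emptyset\neq I\subset L}(-1)^{|I|}\gcd(Y_I)^e,
\]
while Lemma~\ref{lem6} gives the numerator
\[
N(k):=\sum_{x_r\mid x_m}c_{rm}[x_k,x_r]^e=[x_k,x_m]^e+\sum_{\emptyset\neq I\subset L}(-1)^{|I|}\bigl[x_k,\gcd(Y_I)\bigr]^e,
\]
so that $g(k,m)=N(k)/\alpha_{e,S}(x_m)$ with denominator $\alpha_{e,S}(x_m)>0$ by Lemma~\ref{lem3}. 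The only extra ingredient I need is a \emph{spreading} inequality: for $e\ge1$ and incomparable positive integers $a,b$, one has $[a,b]^e+(a,b)^e>a^e+b^e$. Indeed, with $P=ab=[a,b]\cdot(a,b)$ the function $h(s)=s^e+(P/s)^e$ is strictly decreasing on $(0,\sqrt P)$, and $(a,b)<\min(a,b)\le\sqrt P$, whence $h((a,b))=[a,b]^e+(a,b)^e>h(\min(a,b))=a^e+b^e$.

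For $l=2$, put $\{y_1,y_2\}=G_S(x_m)$ and $w=(y_1,y_2)$; the two elements $y_1,y_2$ are incomparable. If $x_m$ violates $\mathcal C$ because $v:=[y_1,y_2]\neq x_m$ (so $v\mid x_m$, $v<x_m$), I choose $x_k=y_1$. A direct cancellation collapses $N(y_1)$ to $x_m^e-v^e>0$, while $\alpha_{e,S}(x_m)-N(y_1)=v^e+w^e-y_1^e-y_2^e>0$ by the spreading inequality; hence $0<g(y_1,m)<1$. If instead $[y_1,y_2]=x_m$ but, say, $w\notin G_S(y_1)$, then since $w\in S$ (as $S$ is gcd-closed) and $w\mid y_1$, $w<y_1$, gcd-closedness furnishes $z\in S$ with $w\mid z\mid y_1$ and $w<z<y_1$. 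From $z\mid y_1$ we get $(z,y_2)=w$, and choosing $x_k=z$ yields $N(z)=\bigl((y_1/z)^e-1\bigr)\bigl([z,y_2]^e-z^e\bigr)>0$ together with $\alpha_{e,S}(x_m)-N(z)=[z,y_2]^e+w^e-z^e-y_2^e>0$ (spreading inequality for the incomparable pair $\{z,y_2\}$, whose gcd is $w$); thus $0<g(z,m)<1$.

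For $l=3$ I would fix a pair $\{y_i,y_j\}\subset G_S(x_m)$ witnessing the failure of $\mathcal C$, with $y_t$ the remaining greatest-type divisor, and reuse the same two choices of witness $x_k$ (namely $x_k=y_i$ when $[y_i,y_j]\neq x_m$, and $x_k=z$ for an intermediate divisor $z$ with $(y_i,y_j)\mid z\mid y_i$ when the gcd-in-$G$ requirement fails). Using properties (i)--(iv) one rewrites both $N(k)$ and $\alpha_{e,S}(x_m)$ as alternating gcd/lcm sums; the new feature is that $y_t$ and the triple gcd $(y_1,y_2,y_3)$ now contribute, so $g(k,m)$ becomes a ratio of eight-term alternating sums and the naive trapping $0<g<1$ need no longer hold.

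I expect the $l=3$ analysis to be the main obstacle: one must control the signs and magnitudes of these extra terms well enough to pin $g(k,m)$ strictly between two consecutive integers. The plan is to organize numerator and denominator so that the contributions indexed by subsets containing $t$ are grouped into further alternating sums to which the positivity of Lemma~\ref{lem3} (equivalently, an iterated spreading inequality) applies, thereby forcing non-integrality. It is precisely here that the strengthening from condition $\mathcal M$ to condition $\mathcal C$, through the requirement $(y_i,y_j)\in G_S(y_i)\cap G_S(y_j)$, must be exploited to guarantee that the witness $z$ exists and interacts predictably with the third divisor $y_t$.
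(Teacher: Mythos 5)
There is a genuine gap: the case $|G_S(x_m)|=3$ is not proved. Your $l=2$ analysis is correct and complete --- the computations $N(y_1)=x_m^e-v^e$ and $N(z)=\bigl((y_1/z)^e-1\bigr)\bigl([z,y_2]^e-z^e\bigr)$ check out, your spreading inequality $[a,b]^e+(a,b)^e>a^e+b^e$ for incomparable $a,b$ is valid and correctly traps $g$ in $(0,1)$ in both subcases. But for $l=3$ you only offer a plan, and the specific plan of ``reusing the same two choices of witness'' actually fails. For instance, in the subcase $[y_1,y_2]<x_m$ with third divisor $y_3$, taking $x_k=y_1$ gives
\[
N(y_1)=x_m^e-[y_1,y_2]^e-[y_1,y_3]^e+\bigl([y_1,y_2],[y_1,y_3]\bigr)^e,
\]
which equals $0$ whenever $[y_1,y_3]=x_m$; then $g(k,m)=0\in\mathbb{Z}$ and the witness proves nothing. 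So the correct witness must be chosen more carefully as a function of how the third greatest-type divisor interacts with the offending pair, and this is exactly the part you have left open.

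For context: the paper does not prove this lemma either --- it is quoted from Zhao--Chen--Hong \cite{ZCH22JCTA} --- but the machinery the paper builds for the case $|G_S(x_m)|\ge 4$ (Lemma \ref{lem9}, powered by the alternating-sum positivity of Lemma \ref{lem8}, which is the iterated form of your spreading inequality) shows what the completed argument looks like. In Case 1 the witness is not a single $y_i$ but $x_k=\gcd(Y_{L\setminus W})$ for a carefully chosen index set $W$ determined by minimizing $|Y_L(i)|=|\{j: [y_i,y_j]<x_m\}|$, so that every $[y_i,x_k]$ is a proper divisor of $x_m$ (giving $N(k)>0$ via Lemma \ref{lem8}) while $\alpha_{e,S}(x_m)-N(k)$ factors as $\bigl(x_k^e/\gcd(Y_L)^e-1\bigr)$ times another positive alternating sum; in Case 2 the witness is $x_d=(x_c,\gcd(Y_{L\setminus\{1,2\}}))$ where $x_c$ is the smallest element of $S$ strictly between $(y_1,y_2)$ and $y_2$. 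Your $l=2$ subcase-2 witness is the degenerate form of this, but promoting it to $l=3$ requires intersecting with the remaining greatest-type divisor, which your sketch does not carry out.
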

	
	Finally, we introduce a principle of duality concerning  the greatest common divisor and the least common multiple. 
	\begin{lemma}\label{lem4} \cite{H82S}
		For any given positive  integers $a_1,\ldots,a_{k-1}$ and $a_k$, 
		$$(a_1,a_2,\ldots,a_k)=\prod_{s=1}^k\prod_{1\le t_1<\ldots<t_s\le k}[a_{t_1},\ldots,a_{t_s}]^{(-1)^{s-1}}.$$
	\end{lemma}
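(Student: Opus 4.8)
The plan is to turn the multiplicative identity into an additive one by passing to $p$-adic valuations, and then to prove the resulting min--max inclusion--exclusion formula by a \emph{layer-cake} argument. First I would observe that both sides of the asserted equation are positive integers assembled multiplicatively from $a_1,\ldots,a_k$, so it suffices to check that their $p$-adic valuations coincide for every prime $p$. Writing $\alpha_i:=v_p(a_i)$ and invoking the elementary facts $v_p\big((a_1,\ldots,a_k)\big)=\min_{1\le i\le k}\alpha_i$ and $v_p\big([a_{t_1},\ldots,a_{t_s}]\big)=\max\{\alpha_{t_1},\ldots,\alpha_{t_s}\}$, applying $v_p$ to Lemma \ref{lem4} and grouping the size-$s$ subsets together reduces the claim to the identity
\[
\min_{1\le i\le k}\alpha_i=\sum_{\emptyset\neq T\subseteq\{1,\ldots,k\}}(-1)^{|T|-1}\max_{i\in T}\alpha_i
\]
for arbitrary nonnegative integers $\alpha_1,\ldots,\alpha_k$.

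To establish this identity I would use a layer-cake representation. For a nonnegative real $t$ put $\beta_i(t):=\mathbf 1[t<\alpha_i]$, so that $\alpha_i=\int_0^\infty\beta_i(t)\,dt$ and, more importantly, $\mathbf 1[t<\min_i\alpha_i]=\prod_{i=1}^k\beta_i(t)$ while $\mathbf 1[t<\max_{i\in T}\alpha_i]=1-\prod_{i\in T}\big(1-\beta_i(t)\big)$ for every nonempty $T$. Integrating in $t$ and pulling the finite sum through the integral, the identity above follows once we verify, for each fixed $t$ and each assignment of $0/1$ values $\beta_i:=\beta_i(t)$, the Boolean identity
\[
\prod_{i=1}^{k}\beta_i=\sum_{\emptyset\neq T\subseteq\{1,\ldots,k\}}(-1)^{|T|-1}\Big(1-\prod_{i\in T}(1-\beta_i)\Big).
\]

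The final step is to check this Boolean identity by splitting its right-hand side into a constant part and a product part. The constant part equals $\sum_{s=1}^{k}(-1)^{s-1}\binom{k}{s}=1-(1-1)^k=1$, while expanding $\prod_{i=1}^k(1+x_i)=\sum_{T}\prod_{i\in T}x_i$ with $x_i=\beta_i-1$ gives $\prod_{i=1}^k\beta_i=\sum_{T}(-1)^{|T|}\prod_{i\in T}(1-\beta_i)$, whence the non-constant part contributes $\prod_i\beta_i-1$; adding the two pieces yields exactly $\prod_i\beta_i$. I expect the main obstacle to be purely bookkeeping: keeping the signs $(-1)^{|T|-1}$ and the role of the empty set straight across the two expansions, since a single misplaced sign collapses the whole cancellation. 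An entirely equivalent route avoiding the integral is induction on $k$ via $\min(a,b)+\max(a,b)=a+b$, splitting subsets according to whether they contain the index $k$; I would keep this in reserve in case a calculation-free inductive presentation reads more cleanly in context.
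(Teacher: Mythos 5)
Your proposal is correct and complete. Note, however, that the paper offers no proof of this statement at all: Lemma \ref{lem4} is simply quoted from Hua's \emph{Introduction to Number Theory} \cite{H82S}, so there is no internal argument to compare yours against. Your route --- reducing to $p$-adic valuations, grouping the size-$s$ inner products into a sum over nonempty subsets $T$, and establishing $\min_i\alpha_i=\sum_{\emptyset\neq T}(-1)^{|T|-1}\max_{i\in T}\alpha_i$ via the layer-cake representation and the Boolean identity $\prod_i\beta_i=\sum_{\emptyset\neq T}(-1)^{|T|-1}\bigl(1-\prod_{i\in T}(1-\beta_i)\bigr)$ --- is the standard textbook derivation of the gcd--lcm duality, and every step checks out: the constant part of the right-hand side is $1-(1-1)^k=1$, the expansion of $\prod_i\bigl(1+(\beta_i-1)\bigr)$ gives the non-constant part as $\prod_i\beta_i-1$, and the interchange of the finite sum with the integral is harmless since all integrands vanish beyond $\max_i\alpha_i$. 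The only caveats are cosmetic: you should note explicitly that the identity with all exponents collected on one side never divides by zero (every lcm is a positive integer, so the rational expression is well defined), and your reserve plan --- induction on $k$ using $\min(a,b)+\max(a,b)=a+b$ --- would serve equally well and is arguably closer to how such identities are usually presented in the cited source.
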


	Note that, Lemma \ref{lem4} can be reformulated by our notation as follows. For any given nonempty set $A$ of positive integers, 
	$$\gcd(A)=\prod_{B\subset A,B\neq \emptyset }\operatorname{lcm}(B)^{(-1)^{|B|-1}}.$$

	\section{Main results}
	In this section, we first state our main result as follows.
	\begin{theorem}\label{thm1}
		Let $e$ be a positive integer, and $S$ be a gcd-closed set with $ \max_{x \in S} \{|G_{S}(x)|\} \geq 4 $. Then the power LCM matrix $\left[S^e\right]$ is divisible by the power GCD matrix $\left(S^e\right)$ in the ring $M_n\left(\mathbb{Z}\right)$ if and only if the set $S$ satisfies the condition $\mathcal{C}$.
	\end{theorem}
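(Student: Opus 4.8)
The plan is to reduce the matrix divisibility $(S^e)\mid[S^e]$ to an arithmetic divisibility among the rational numbers $g(k,m)$, and then to read off both directions from the lattice structure that condition $\mathcal{C}$ forces on the greatest-type divisors. Writing $W=(S^e)^{-1}=(w_{it})$ as in Lemma \ref{lem1}, I would compute the $(i,j)$-entry of $(S^e)^{-1}[S^e]=W[S^e]$ by interchanging the order of summation and collapsing the inner sum via the definition (\ref{3}) of $g$:
\[
\big(W[S^e]\big)_{ij}=\sum_{x_i\mid x_t}\frac{c_{it}}{\alpha_{e,S}(x_t)}\sum_{x_r\mid x_t}c_{rt}[x_j,x_r]^{e}=\sum_{x_i\mid x_t}c_{it}\,g(j,t).
\]
Since each $c_{it}\in\mathbb{Z}$, the sufficiency direction reduces to proving $g(k,m)\in\mathbb{Z}$ for all $k,m$, and the necessity direction to exhibiting a single non-integer entry. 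Lemma \ref{lem6} and Lemma \ref{lem2} supply closed forms for the numerator and denominator of $g(k,m)$ as alternating sums over the subsets $I\subseteq L$ of $G_S(x_m)=Y_L$, so everything is recast as a divisibility between two such sums.

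For sufficiency I would first factor out $d:=(x_k,x_m)\in S$. Because $(x_k,\gcd(Y_I))=(d,\gcd(Y_I))$ and $[x_k,w]=(x_k/d)[d,w]$ for every $w\mid x_m$, Lemma \ref{lem6} gives
\[
g(k,m)=\Big(\frac{x_k}{d}\Big)^{e}\,\frac{x_m^{e}+\sum_{\emptyset\neq I\subseteq L}(-1)^{|I|}[d,\gcd(Y_I)]^{e}}{\alpha_{e,S}(x_m)},
\]
so the goal becomes showing $\alpha_{e,S}(x_m)$ divides the displayed numerator whenever $C_S(d,x_m)$ satisfies condition $\mathcal{C}$; this is precisely the extension of Lemma \ref{lem2.8} from $|G_S(x_m)|\le 3$ to $|G_S(x_m)|\ge 4$, and it is where the main effort goes. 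The key structural claim I would prove is that condition $\mathcal{C}$ forces $\{\gcd(Y_I):I\subseteq L\}$ to be a Boolean sublattice of the divisor lattice that is contained in $S$: using property (iv) together with repeated applications of Lemma \ref{lem7} to locate the greatest-type divisors of each $\gcd(Y_I)$, one shows that union of index sets corresponds to $\gcd$ and intersection to $\operatorname{lcm}$. Matching each $y_i\in G_S(x_m)$ with its ``deficiency part'' then makes both $\alpha_{e,S}(x_m)$ and the numerator split as products over the atoms $i\in L$, with each factor of the shape $\big(d_i^{e}-[e_i,d_i']^{e}\big)/\big(d_i^{e}-(d_i')^{e}\big)$; the content of the structural lemma is that $\mathcal{C}$ on $C_S(d,x_m)$ pins $[e_i,d_i']$ to lie in $\{d_i',d_i\}$, so that each factor is $0$ or $1$ and hence $g(k,m)\in\mathbb{Z}$.

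For necessity I would argue by contraposition: if some $x_m$ with $|G_S(x_m)|\ge2$ violates $\mathcal{C}$, I would choose such an $x_m$ minimal under divisibility and, as in Lemma \ref{lem2.9}, select $x_k$ (for instance $x_k=d$ with $d$ a suitable element of $S$) so that the factorization above produces a factor with $d_i'<[e_i,d_i']<d_i$, making $g(k,m)\notin\mathbb{Z}$. The subtle point is that a single non-integer $g$ need not produce a non-integer matrix entry, since the entry is the integer combination $\sum_{x_i\mid x_t}c_{it}\,g(j,t)$. To control the cancellation I would establish the positivity statement flagged as the first obstacle: for any positive integer $x$ and any set $Y$ of proper divisors of $x$, one has $x^{e}+\sum_{\emptyset\neq Z\subseteq Y}(-1)^{|Z|}\gcd(Z)^{e}>0$, which follows from the expansion (\ref{2}), the positivity of $(\xi_e*\mu)(d)$, and the congruence $(S^e)=E\varDelta E^{T}$ underlying Lemma \ref{lem3}. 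Via the duality of Lemma \ref{lem4}, this transfers to the least-common-multiple sums in the numerator and guarantees that the offending prime cannot be cleared by the surrounding cancellations, so the relevant entry of $(S^e)^{-1}[S^e]$ is genuinely non-integral.

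I expect the Boolean-lattice characterization in the sufficiency step to be the principal obstacle. For $|G_S(x_m)|\le 3$ the few subsets $I$ can be handled by direct inspection, but for $|G_S(x_m)|\ge 4$ the exponential number of subsets, together with the recursive reappearance of condition $\mathcal{C}$ at every $\gcd(Y_I)$, makes case analysis infeasible; the crux is to prove once and for all that subset inclusion on $L$ matches the divisibility order on $\{\gcd(Y_I)\}$ and that this identification is preserved upon intersecting with an arbitrary $d\in S$, which is exactly what yields the clean product factorization of numerator and denominator and thereby the integrality of $g(k,m)$.
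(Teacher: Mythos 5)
Your global architecture matches the paper's: reduce $(S^e)\mid[S^e]$ to the integrality of the numbers $g(k,m)$ via Lemma \ref{lem1}, and treat the two directions separately. Your sufficiency route is a legitimate and in fact rather clean variant of the paper's Lemma \ref{lem3.3}: under condition $\mathcal{C}$ the duality formula of Lemma \ref{lem4} gives $\gcd(Y_I)=\big(\prod_{i\in I}y_i\big)x_m^{1-|I|}$, hence $\alpha_{e,S}(x_m)=x_m^{e}\prod_{i\in L}\big(1-(y_i/x_m)^{e}\big)$, and the same computation applied to the multiset $\{[d,y_i]\}_{i\in I}$ (all of whose sub-lcm's equal $x_m$ because $d\mid x_m$) factors the numerator, with Lemma \ref{lem7} pinning each $[d,y_i]$ into $\{y_i,x_m\}$; this yields $g(k,m)\in\{0,(x_k/d)^{e}\}$ without the telescoping cancellation and the inductive Lemma \ref{lem3.2} that the paper uses. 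Modulo writing out those details (and citing Lemma \ref{lem2.8} for $|G_S(x_m)|\le 3$, as you do), that half would go through.

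The necessity direction, however, has a genuine gap. You propose to exhibit a non-integral $g(k,m)$ by producing a factor of the product with $d_i'<[e_i,d_i']<d_i$, but the product factorization you are relying on exists only when all pairwise lcm's of elements of $G_S(x_m)$ equal $x_m$ --- that is, only when the first half of condition $\mathcal{C}$ already holds. The hard case of failure of $\mathcal{C}$ is precisely when some $[y_i,y_j]<x_m$ (Case 1 of the paper's Lemma \ref{lem9}); there the factorization collapses, and the paper needs a quite different construction: it takes $x_k=\gcd(Y_{L\setminus W})$ for a set $W$ determined by minimizing $|\{y_j:[y_i,y_j]<x_m\}|$, and then applies the positivity Lemma \ref{lem8} \emph{twice} --- once to show the numerator is positive, once to show $\alpha_{e,S}(x_m)$ minus the numerator is positive --- to sandwich $0<g(k,m)<1$. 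Your sketch does not address this case. Separately, your mechanism for upgrading a non-integral $g(l,m)$ to a non-integral matrix entry is not right: the positivity lemma controls alternating sums inside a single $g$, not the cancellation among the terms of $\sum_{x_s\mid x_k}c_{sk}\,g(l,k)$. The paper controls that sum by choosing $x_s:=\max\{x_i: x_m\mid x_i,\ g(l,i)\notin\mathbb{Z}\}$, so that by maximality every summand other than $g(l,s)$ is an integer and the entry is $g(l,s)$ plus an integer. Without these two ingredients the necessity argument does not close.
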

	
	In the following, we divide the proof of Theorem \ref{thm1} into  sufficiency part and necessity part. 
	
	\subsection{Proof of the sufficiency of  Theorem \ref{thm1}}
	
	In this subsection, we will prove the sufficiency of Theorem \ref{thm1}. Firstly, we need to prove some crucial properties of the gcd-closed set $S$ satisfying  the condition $\mathcal{C}$.

	\begin{lemma}\label{lem3.1}
		Let $S$ be a gcd-closed set, $x\in S$ satisfy  the condition $\mathcal{C}$ and   $G_S\left(x\right)=Y_L$ where $L=\{1,\ldots,l\}$  with $l\ge 3$. If   $I$ and $J$  are two nonempty distinct index subsets of  $ L$, then $$\gcd(Y_I)\neq \gcd(Y_J).$$
	\end{lemma}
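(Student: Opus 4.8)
The plan is to prove the contrapositive is awkward here, so instead I would argue directly by exploiting the combinatorial structure that condition $\mathcal{C}$ imposes on $G_S(x)$. The key insight is that condition $\mathcal{C}$ forces the greatest-type divisors of $x$ to behave like atoms in a Boolean lattice: for any two distinct $y,z\in G_S(x)$ we have $[y,z]=x$ and $(y,z)\in G_S(y)\cap G_S(z)$. The strategy is to show that the map $I\mapsto \gcd(Y_I)$ from nonempty subsets of $L$ to positive integers is injective. The natural way to detect which elements $y_i$ contribute to a given gcd is through divisibility: I would aim to prove that for a nonempty $I\subset L$ and an index $i\in L$, one has $y_i\mid \gcd(Y_I)$ if and only if $i\in I$, which immediately yields injectivity since $I=\{i\in L: y_i\mid\gcd(Y_I)\}$ can be recovered from $\gcd(Y_I)$.

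The forward implication ($i\in I\Rightarrow y_i\mid\gcd(Y_I)$) is false as stated, so I must be more careful: $\gcd(Y_I)$ divides each $y_j$ for $j\in I$, not the reverse. The correct recoverable invariant should instead be $I=\{i\in L: \gcd(Y_I)\mid y_i\}$. So the real claim to establish is: for distinct nonempty $I,J\subset L$, the common value $d:=\gcd(Y_I)=\gcd(Y_J)$ leads to a contradiction. First I would suppose such $I\neq J$ exist and, without loss of generality, pick an index $i\in I\setminus J$. The heart of the argument is to show that $d\mid y_i$ (clear, since $i\in I$) combined with $d=\gcd(Y_J)=\gcd_{j\in J}y_j$ forces $y_i$ into a relationship with the $y_j$ ($j\in J$) that violates the greatest-type-divisor property or the $\mathcal{C}$-enforced equality $[y_a,y_b]=x$.

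The main technical engine will be Lemma \ref{lem4} (the gcd-lcm duality) together with condition $\mathcal{C}$. Under $\mathcal{C}$, every pairwise lcm among the $y_j$ equals $x$, and the pairwise gcds $(y_a,y_b)$ are themselves greatest-type divisors of both $y_a$ and $y_b$. I expect that $\gcd(Y_I)$ can be computed explicitly: when $|I|\ge 2$, the duality formula collapses because all higher lcms equal $x$, giving a clean expression for $\gcd(Y_I)$ in terms of $x$ and the pairwise gcds, whereas $\gcd(Y_{\{i\}})=y_i$ for singletons. Comparing these normal forms for $I$ and $J$ should expose the contradiction. The hard part will be handling the overlap between $I$ and $J$ and the interplay of the pairwise gcds — in particular ruling out ``accidental'' coincidences where different subsets produce the same gcd through cancellation in the duality product. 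I anticipate needing Lemma \ref{lem7} to control the structure of $A_S(z,x)$ and to guarantee that the pairwise gcds occurring in the expansion are genuinely distinct atoms, so that no two distinct index sets can yield equal gcd values.

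Concretely, I would organize the proof as follows. First, record the consequences of $\mathcal{C}$ for $x$: $[y_a,y_b]=x$ for all $a\neq b$ in $L$, and $(y_a,y_b)\in G_S(y_a)\cap G_S(y_b)$. Second, use Lemma \ref{lem4} to write
\[
\gcd(Y_I)=\prod_{\emptyset\neq B\subset I}\operatorname{lcm}(Y_B)^{(-1)^{|B|-1}},
\]
and observe that for $|B|\ge 2$ every $\operatorname{lcm}(Y_B)=x$, so the product telescopes into a controlled expression involving only $x$ and the singleton terms $y_b$. Third, derive the normal form and argue that if $\gcd(Y_I)=\gcd(Y_J)$ with $I\neq J$, then picking $i\in I\setminus J$ and computing $\gcd(Y_{J\cup\{i\}})$ versus $\gcd(Y_J)$ yields $(y_i,\gcd(Y_J))=\gcd(Y_J)$, i.e. $\gcd(Y_J)\mid y_i$; combined with the atomicity of the pairwise gcds this would force $y_i$ to coincide with or be comparable to some $y_j$, contradicting that the $y_i$ are distinct greatest-type divisors with pairwise lcm equal to $x$. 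The delicate step, and the one I would spend the most effort justifying rigorously, is this final divisibility-to-contradiction passage, since it is exactly where the hypothesis $l\ge 3$ and the full strength of condition $\mathcal{C}$ must be invoked.
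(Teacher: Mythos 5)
Your overall strategy coincides with the paper's: apply Lemma \ref{lem4} together with the observation that condition $\mathcal{C}$ forces $\operatorname{lcm}(Y_K)=x$ for every $K\subset L$ with $|K|\ge 2$, so that the duality product collapses to the normal form $\gcd(Y_I)=\bigl(\prod_{i\in I}y_i\bigr)x^{1-|I|}$. That is exactly the computation at the heart of the paper's proof, and up to that point your plan is sound. (Your opening idea of recovering $I$ as $\{i: \gcd(Y_I)\mid y_i\}$ would not work either --- that set can strictly contain $I$ --- but you abandon it, so it does no harm.)

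The genuine gap is in the final ``divisibility-to-contradiction passage,'' which you yourself flag as the delicate step. From $\gcd(Y_I)=\gcd(Y_J)$ and $i\in I\setminus J$ you correctly obtain $\gcd(Y_J)\mid y_i$, hence $\gcd(Y_{J\cup\{i\}})=(y_i,\gcd(Y_J))=\gcd(Y_J)$. But your proposed way to finish --- invoking ``atomicity of the pairwise gcds'' to force $y_i$ to be comparable to some $y_j$ with $j\in J$ --- is not a valid inference: $d\mid y_i$ with $d=\gcd(Y_J)$ carries no comparability information (compare $\gcd(6,10)=2$, which divides $14$ although $14$ is comparable to neither $6$ nor $10$), and I do not see how to repair that route; Lemma \ref{lem7} does not supply what is needed here. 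The correct finish is already contained in the normal form you derived: $\gcd(Y_{J\cup\{i\}})=y_i\bigl(\prod_{j\in J}y_j\bigr)x^{-|J|}=y_i x^{-1}\gcd(Y_J)$, so equality with $\gcd(Y_J)$ forces $y_i=x$, contradicting $y_i\in G_S(x)$. This is precisely the paper's argument: it first proves the statement for pairs $J\subset I$ with $|I|=|J|+1$ by this normal-form comparison, and then reduces the general case to that one by setting $H=I\cup J$ and noting $\gcd(Y_H)=\gcd(Y_J)=\gcd(Y_{H\setminus\{i_1\}})$ for $i_1\in I\setminus J$. With that one replacement your plan becomes the paper's proof.
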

	\begin{proof} We first claim that if $J\subset I$ satisfies that  $|I|-1=|J|$, then Lemma \ref{lem3.1} is true. In what follows we will prove the claim. 
		
		Suppose that $\gcd(Y_I)=\gcd(Y_J).$	
		Since $J\subset I\subset L$ and $|I|-1=|J|$, there exists unique integer $i_0\in I$ such that  $I\setminus J=\{i_0\}$.  Noticing that $x$ satisfies the condition $\mathcal{C}$,  we have $x=\left[y_i,y_j\right]$ for any integers $1\le i<j\le l$.  This implies that $\operatorname{lcm}(Y_K)=x$ for any index set  $K\subset L$ with $|K|\ge 2$.  It then follows from Lemma \ref{lem4} that 
		\begin{align}
			\gcd(Y_{J})&=\prod_{K\subset J,~K\neq \emptyset}\operatorname{lcm}(Y_K)^{(-1)^{|K|-1}}		
			\nonumber\\
			&=\bigg(\prod_{j\in J}y_j\bigg)\bigg(	\prod_{K\subset J,~|K|\ge2}x^{(-1)^{|K|-1}}\bigg)\nonumber\\	
			&=\bigg(\prod_{j\in J}y_j\bigg)\bigg(x^{-\sum\limits_{s=2}^{|J|}(-1)^{s}{|J|\choose s}}\bigg)\nonumber\\
			&=\bigg(\prod_{j\in J}y_j\bigg) x^{1-|J|}. \label{3.1}
		\end{align}
		Similarly, we also have  
		\begin{align}
			\gcd(Y_{I})=\bigg(\prod_{i\in I}y_i\bigg) x^{1-|I|}. \label{3.2}
		\end{align} 
		Noticing that  $I\setminus J=\{i_0\}$, 
		we deduce  by  (\ref{3.1}) and (\ref{3.2}) that 
		\begin{align}
			\gcd(Y_{I})=y_{i_0}\bigg(\prod_{j\in J}y_j\bigg)
			x^{1-|I|}=y_{i_0}x^{-1}\gcd(Y_{J}). \label{e3.3}
		\end{align}
	By our supposition  $\gcd(Y_I)=\gcd(Y_J)$ and (\ref{e3.3}), we  obtain  that
		$x= y_{i_0}$.  This leads to a contradiction with $y_{i_0}\in G_S(x)$. So the Claim  is proved.
		
		Now we return to the proof of Lemma \ref{lem3.1}. Assume that 
		$\gcd(Y_I)=\gcd(Y_J).$  Since $I\neq J$,  there must exist some integers either $i\in I\setminus J$ or   $j\in J\setminus I$. Without loss of generality, we let $i_1  \in I\setminus J$.  Let $H$ be the union of sets $I$ and $J$. Denote $H=\{h_1,\ldots,h_a, i_1\}$  and  $T=\{x,y_{h_1},...,y_{h_a},y_{i_1}\}$. Since  $G_S\left(x\right)=Y_L$, by the definition of $\left< T\right>$, we deduce that  the set $\left< T\right>$ is a gcd-closed set and $$G_{\left< T\right>}\left(x\right)=\left\{y_{h_1},...,y_{h_a},y_{i_1}\right\}=Y_H.$$ It then follows from the above Claim that 
		\begin{align}\label{3.3}
			\gcd(Y_H)\neq \gcd(Y_{H\setminus\{i_1\}}).
		\end{align}
		But by  our assumption   $\gcd(Y_I)=\gcd(Y_J)$  and  the definition of $H$, we get
		\begin{align*}
			\gcd(Y_H)=\gcd(Y_{I\cup J})=(\gcd(Y_I),\gcd(Y_J))=\gcd(Y_J).
		\end{align*}
		Then by the fact that $J\subset H\setminus\{i_1\} \subset H $, we deduce that 
		\begin{align*}
			\gcd(Y_J)
			=\gcd(Y_{H\setminus\{i_1\}})=\gcd(Y_H),
		\end{align*}
		which  contradicts to (\ref{3.3}). This means that $\gcd(Y_I)\neq \gcd(Y_J).$
		
		So Lemma \ref{lem3.1} is proved. 
		\hfill\ensuremath{\mbox{$\Box$}}
	\end{proof}
	
	\begin{lemma}\label{lem3.2}
		Let $S$ be a gcd-closed set,  $L=\{1,\ldots,l\}$ be an index set  with $l\ge 3$,  $x\in S$ with $G_S\left(x\right)=Y_L$ and $A_S(\gcd(Y_L),x)$ satisfy  the condition $\mathcal{C}$. Then for any given integer  $i\in L$ and any nonempty index set $I\subset L\setminus \{i\}$, 	 the following two statements are true:
		
		(i).  $$\gcd\big(Y_{I\cup \{i\}}\big)\in G_S\left(\gcd(Y_I)\right).$$
		
		(ii).  If  $z\in S$ and $(z,x)\nmid y_i,$  then $$\big(z, \gcd(Y_I)\big)\nmid y_i.$$ 
	\end{lemma}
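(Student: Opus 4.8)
The plan is to reduce both statements to a single divisibility identity, namely $\operatorname{lcm}(y_i,\gcd(Y_I))=x$, together with one application of Lemma \ref{lem7}. Write $a:=\gcd(Y_I)$ and $b:=\gcd(Y_{I\cup\{i\}})$; both lie in $S$ since $S$ is gcd-closed, $b\mid a$ because $Y_I\subseteq Y_{I\cup\{i\}}$, and $b=(a,y_i)$ by property (i). First I would record that $x$ itself satisfies condition $\mathcal{C}$: indeed $x\in A_S(\gcd(Y_L),x)$ and $|G_S(x)|=l\ge 3$, so condition $\mathcal{C}$ for the set $A_S(\gcd(Y_L),x)$ applies to $x$ and yields $[y_i,y_j]=x$ for all distinct $i,j\in L$. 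Since $i\notin I$, property (iv) then gives $[y_i,a]=\big[y_i,\gcd(Y_I)\big]=\gcd\big(\bigcup_{j\in I}\{[y_i,y_j]\}\big)=x$. Using $[y_i,a]\,(y_i,a)=y_i\,a$ and $(y_i,a)=b$, this becomes $x\,b=y_i\,a$, so $a/b=x/y_i>1$ and hence $b<a$.

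For part (i) it then remains to prove maximality: if $z\in S$ satisfies $b\mid z\mid a$, then $z\in\{a,b\}$. The preliminary observation is that $\gcd(Y_L)\mid b\mid z$, so $A_S(z,x)\subseteq A_S(\gcd(Y_L),x)$ and hence $A_S(z,x)$ inherits condition $\mathcal{C}$. I would then split into two cases. If $z\mid y_i$, then $z\mid(a,y_i)=b$, and together with $b\mid z$ this forces $z=b$. If $z\nmid y_i$, I would apply Lemma \ref{lem7} to the greatest-type divisor $y_i\in G_S(x)$ and the divisor $z\mid x$, obtaining $[y_i,z]=x$; combining $b\mid z$, $b\mid y_i$ and $z\mid a$ gives $(z,y_i)=b$, whence $z=x\,(z,y_i)/y_i=x\,b/y_i=a$. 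This shows $b\in G_S(a)$ and proves (i).

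For part (ii) I would set $d:=(z,x)$ and use $a\mid x$ to rewrite the target as $(z,\gcd(Y_I))=(z,a)=(d,a)$. The hypothesis $(z,x)\nmid y_i$ reads $d\nmid y_i$, so there is a prime $p$ with $v_p(d)>v_p(y_i)$. Since $d\mid x$ and the identity $x=[y_i,a]$ gives $v_p(x)=\max\{v_p(y_i),v_p(a)\}$, the inequality $v_p(d)\le v_p(x)$ forces $v_p(a)\ge v_p(d)>v_p(y_i)$. Consequently $v_p((d,a))=\min\{v_p(d),v_p(a)\}=v_p(d)>v_p(y_i)$, so $(z,a)=(d,a)\nmid y_i$, which is exactly (ii).

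The main obstacle, and the real content of the lemma, is the identity $[y_i,\gcd(Y_I)]=x$: it is what propagates condition $\mathcal{C}$ from the pairwise facts $[y_i,y_j]=x$ to the gcd $a$, and it feeds both the computation $x\,b=y_i\,a$ in (i) and the valuation comparison in (ii). The second delicate point is the legitimacy of invoking Lemma \ref{lem7} in part (i), which rests on verifying the containment $A_S(z,x)\subseteq A_S(\gcd(Y_L),x)$ and therefore on the chain $\gcd(Y_L)\mid b\mid z$. Once these two facts are secured, the remaining manipulations are routine gcd/lcm and $p$-adic valuation bookkeeping.
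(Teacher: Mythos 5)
Your proof is correct, and it takes a genuinely different route from the paper's. The paper proves both parts simultaneously by induction on $|I|$: it peels off an index $j_0\in I$, applies the induction hypothesis to $J=I\setminus\{j_0\}$, invokes Lemma \ref{lem3.1} to guarantee $\gcd(Y_J)\neq\gcd(Y_L)$ so that $\gcd(Y_J)$ lies in $A_S(\gcd(Y_L),x)$ and hence satisfies condition $\mathcal{C}$, and then reads off part (i) from the membership clause of condition $\mathcal{C}$ at $\gcd(Y_J)$ and part (ii) from the identity $(z,\gcd(Y_J))=\big[(z,\gcd(Y_{J\cup\{i\}})),(z,\gcd(Y_I))\big]$. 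You instead avoid induction (and Lemma \ref{lem3.1}) entirely by isolating the single identity $[y_i,\gcd(Y_I)]=x$, which follows at once from property (iv) and the pairwise relations $[y_i,y_j]=x$; from it you get $x\,b=y_i\,a$ with $a=\gcd(Y_I)$, $b=(a,y_i)$, prove the maximality clause of (i) by the clean case split $z\mid y_i$ versus $z\nmid y_i$ (the latter settled by one application of Lemma \ref{lem7}, justified by the containment $A_S(z,x)\subseteq A_S(\gcd(Y_L),x)$), and dispatch (ii) by a $p$-adic valuation comparison using $v_p(x)=\max\{v_p(y_i),v_p(a)\}$. All the steps check out: $a,b\in S$ by gcd-closedness, $b<a$ since $a/b=x/y_i>1$, and $(z,\gcd(Y_I))=((z,x),a)$ since $a\mid x$. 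Your argument is shorter and more self-contained; what the paper's induction buys in exchange is the intermediate structural facts (e.g.\ $\big[\gcd(Y_{J\cup\{i\}}),\gcd(Y_I)\big]=\gcd(Y_J)$ and the pairwise distinctness of the $\gcd(Y_I)$) that exhibit $\langle G_S(x)\rangle\cup\{x\}$ as a Boolean lattice, a viewpoint the authors return to in their closing remarks, but none of that is needed for the lemma itself.
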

	\begin{proof}
		Let $i\in L$ be any given integer and $I$ be any nonempty  subset of  $L\setminus \{i\}$  with $|I|=t.$ Then  $1\le t<l$. In what follows we  use induction on $t$ to prove Lemma \ref{lem3.2}.
		
		We first let $t=1$. Then we may let $I=\{j\}$, that is $Y_I=\{y_j\}=\{\gcd(Y_I)\}$. Since $x\in A_S(\gcd(Y_L),x)$ and $A_S(\gcd(Y_L),x)$ satisfies  the condition $\mathcal{C}$,  we know that 
		$x$ satisfies the condition $\mathcal{C}$.  Then by the fact that   $\{y_i,y_{j}\}\subset Y_L$ and $G_S\left(x\right)=Y_L$, we obtain that 
		$$\gcd\big(Y_{I\cup \{i\}}\big)=\left(y_{i},y_{j}\right)\in G_S\left(y_{j}\right)=G_S\left(\gcd(Y_I)\right)$$ as desired. So  Lemma \ref{lem3.2} (i) is proved  when $t=1$.

		Now we prove that $\left(z, y_j\right)\nmid y_i$. Otherwise, we suppose that $\left(z, y_j\right)| y_i$. Since  $\{y_i,y_{j}\}\subset G_S\left(x\right)$ and  $x$ satisfies condition $\mathcal{C}$, we have   $\left[y_i,y_j\right]=x$. So  
		$$\left(z,x\right)=\left(z,\left[y_i,y_j\right]\right)=\left[\left(z,y_i\right),\left(z,y_j\right)\right]. $$
		Thus by  our supposition  $\left(z,y_j\right)|y_i$, we get  $\left(z,x\right)|y_i$. This  contradicts to the fact $\left(z,x\right)\nmid y_i$. Hence $(z,y_j)=(z,\gcd(Y_I))\nmid y_i$, which implies that  Lemma \ref{lem3.2} (ii) holds for the case $t=1$.		
		The proof  of  Lemma \ref{lem3.2}  for the case $t=1$ is complete.

		Now we let $2\le t<l$.  Assume that  Lemma \ref{lem3.2}  is true for the case $t-1$.  Next, we consider the case $t$. Let $j_0$ be any given integer of $I$, and denote $J=I\setminus\{j_0\}$.  Since  $I\subset L\setminus \{i\}$, we have $|J|=|I|-1=t-1\ge 1$ and $J\subset L\setminus \{i,j_0\}$.
		Then by the induction hypothesis, 
		we deduce that 
		\begin{align}\label{3.4}
			\gcd\big(Y_{J\cup \{i\}}\big)\in G_S\left(\gcd(Y_J)\right),
		\end{align}
		\begin{align}\label{3.5}
			\gcd\big(Y_I\big)=\gcd\big(Y_{J\cup \{j_0\}}\big)\in G_S\left(\gcd(Y_J)\right)
		\end{align}
		and 
		\begin{align}\label{e3.6}
			\big(z, \gcd(Y_J)\big)\nmid y_i.
		\end{align}
		Noticing that  $J\neq L$, we obtain  by Lemma \ref{lem3.1} that $$\gcd(Y_J)\neq\gcd(Y_L),$$
		which implies that $\gcd(Y_J)\in  A_S(\gcd(Y_L),x)$. And so $\gcd(Y_J)$ satisfies the condition $\mathcal{C}$ since  $A_S(\gcd(Y_L),x)$ satisfies  the condition $\mathcal{C}$. It then follows from (\ref{3.4}) and (\ref{3.5}) that
		\begin{align}\label{e3.7}
			\big(\gcd\big(Y_{J\cup \{i\}}\big),\gcd(Y_I)\big)\in G_S\big(\gcd(Y_I)\big)
		\end{align}
		and 
		\begin{align}\label{e3.8}
			\big[\gcd\big(Y_{J\cup \{i\}}\big),\gcd(Y_I)\big]= \gcd(Y_J).
		\end{align}
				Noticing that
		\begin{align}\label{e3.9}
			\gcd\big(Y_{I\cup \{i\}}\big)&=\gcd\big(Y_{J\cup \{i\}\cup\{j_0\}}\big)\nonumber\\
			&=\big(\gcd\big(Y_{J\cup \{i\}}\big),\gcd\big(Y_{J\cup \{j_0\}}\big)\big)\nonumber\\
			&=\big(\gcd\big(Y_{J\cup \{i\}}\big),\gcd(Y_I)\big),
		\end{align}
		 we deduce from (\ref{e3.7}) and (\ref{e3.9}) that  Lemma \ref{lem3.2} (i) is true  for the case $t$.  
		
		It remains to  show that $(z,\gcd(Y_I))\nmid y_i$. Otherwise, we suppose that $(z,\gcd(Y_I))\mid y_i$.	
		From (\ref{e3.9}),  we obtain that 
		\begin{align}
			\left(z,\gcd(Y_J)\right)&=\big(z,\big[\gcd\big(Y_{J\cup \{i\}}\big),\gcd(Y_I)\big]\big)
			=\big[(z,\gcd\big(Y_{J\cup\{i\}}),(z,\gcd(Y_I))\big]. \label{3.6}
		\end{align}
		Then by our assumption that $(z,\gcd(Y_I))\mid y_i$ and the fact that  $\gcd\big(Y_{J\cup\{i\}})\mid y_i$, we can  deduce  by  (\ref{3.6}) that 
		$\left(z,\gcd(Y_J)\right)|y_i$.	
		But by (\ref{e3.6}) we know that $\big(z, \gcd(Y_J)\big)\nmid y_i.$  This is  a contradiction. 
		Hence $(z,\gcd(Y_I))\nmid y_i$ as desired. So Lemma \ref{lem3.2} (ii) is proved  for the case $t$.
		
		The proof  of  Lemma \ref{lem3.2}  is complete. 
		\hfill\ensuremath{\mbox{$\Box$}}
	\end{proof}
	
	\begin{lemma}\label{lem3.3}
		Let $S=\left\{x_1,\ldots,x_n\right\}$ be a gcd-closed set, 	
		$1\leq k,m \leq n$ and $|G_S(x_m)|\ge4$. If the set $C_S((x_k,x_m),x_m)$ satisfies the condition $\mathcal{C}$, then  $g\left(k,m\right)\in \mathbb{Z}$.
	\end{lemma}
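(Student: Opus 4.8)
The plan is to reduce to the case $x_k\mid x_m$ and then show that the resulting quotient is either $0$ or $1$. Write $a:=(x_k,x_m)$, which lies in $S$ since $S$ is gcd-closed, and note $a\mid x_m$. For any $w\mid x_m$ one has $(x_k,w)=(a,w)$, because $(x_k,w)\mid(x_k,x_m)=a$ and $(x_k,w)\mid w$ force $(x_k,w)\mid(a,w)$, while $(a,w)\mid a\mid x_k$ gives the reverse; hence $[x_k,w]=\tfrac{x_k}{a}[a,w]$ with $\tfrac{x_k}{a}$ a positive integer. Feeding this into Lemma \ref{lem6} (with $G_S(x_m)=Y_L$, $L=\{1,\dots,l\}$) factors the numerator of $g(k,m)$ as $\big(\tfrac{x_k}{a}\big)^e N_a$, where, using the convention $\gcd(Y_\emptyset):=x_m$, $N_a:=\sum_{I\subseteq L}(-1)^{|I|}[a,\gcd(Y_I)]^e$. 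Since $\big(\tfrac{x_k}{a}\big)^e\in\mathbb{Z}$, it suffices to prove $N_a/\alpha_{e,S}(x_m)\in\mathbb{Z}$, and I will show this ratio is $0$ or $1$. The denominator is expressed by Lemma \ref{lem2} as $\alpha_{e,S}(x_m)=\sum_{I\subseteq L}(-1)^{|I|}(x_m,\gcd(Y_I))^e=\sum_{I\subseteq L}(-1)^{|I|}\gcd(Y_I)^e$, the last step because $\gcd(Y_I)\mid x_m$; recall $\alpha_{e,S}(x_m)>0$ by Lemma \ref{lem3}.

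I would then split on whether $a\mid\gcd(Y_L)$. If $a\mid\gcd(Y_L)$, then since $Y_I\subseteq Y_L$ gives $\gcd(Y_L)\mid\gcd(Y_I)$ by property (i), we get $a\mid\gcd(Y_I)$ and hence $[a,\gcd(Y_I)]=\gcd(Y_I)$ for every $I$. Comparing term by term with the formula for $\alpha_{e,S}(x_m)$ above yields $N_a=\alpha_{e,S}(x_m)$, so the ratio equals $1$. Note that this case uses neither condition $\mathcal{C}$ nor Lemma \ref{lem7}.

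The substantive case is $a\nmid\gcd(Y_L)$, that is $a\nmid y_{i_0}$ for some $i_0\in L$. Here I would invoke Lemma \ref{lem7}: condition $\mathcal{C}$ is inherited by subsets, so $A_S(a,x_m)\subseteq C_S(a,x_m)$ also satisfies $\mathcal{C}$, and applying Lemma \ref{lem7} with $x=x_m$, $y=y_{i_0}$, $z=a$ gives $[a,y_{i_0}]=x_m$. Then for every $I\subseteq L\setminus\{i_0\}$, properties (i) and (iv) give $[a,\gcd(Y_{I\cup\{i_0\}})]=\big([a,\gcd(Y_I)],[a,y_{i_0}]\big)=\big([a,\gcd(Y_I)],x_m\big)=[a,\gcd(Y_I)]$, the last equality because $[a,\gcd(Y_I)]\mid x_m$. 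The fixed-point-free involution on the subsets of $L$ that toggles $i_0$ thus pairs each $I$ not containing $i_0$ with $I\cup\{i_0\}$, and the two matched terms of $N_a$ carry opposite signs but equal $e$-th powers, so each pair cancels. Hence $N_a=0$ and the ratio is $0$. In both cases $g(k,m)=\big(\tfrac{x_k}{a}\big)^e\cdot\big(N_a/\alpha_{e,S}(x_m)\big)\in\{0,\big(\tfrac{x_k}{a}\big)^e\}\subseteq\mathbb{Z}$.

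The main obstacle is the second case, and specifically the single equality $[a,y_{i_0}]=x_m$: this is exactly the point where the full strength of condition $\mathcal{C}$ (rather than merely condition $\mathcal{M}$, which only controls $[y_i,y_j]$ for greatest-type divisors $y_i,y_j$) is required, since $a$ sits strictly below $x_m$. Lemma \ref{lem7} is precisely tailored to supply it, after which the cancellation is routine bookkeeping. As an alternative to the involution, Lemmas \ref{lem3.1} and \ref{lem3.2} can be used to make the Boolean-lattice structure of $\{\gcd(Y_I)\}_{I\subseteq L}$ explicit and reach the same vanishing, but the involution argument above minimizes the computation.
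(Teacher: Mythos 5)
Your proof is correct, and in the decisive case it takes a genuinely different and shorter route than the paper. The paper's Case 1 (there phrased as $(x_k,x_m)\mid\gcd(Y_L)$) is essentially identical to yours, but in the complementary case the paper does not perform your initial reduction to $a=(x_k,x_m)$: it keeps $x_k$ throughout and proves $[x_k,\gcd(Y_I)]=[x_k,\gcd(Y_{I\cup\{l\}})]$ for each $I\subset L\setminus\{l\}$ by applying Lemma \ref{lem7} \emph{at the element} $\gcd(Y_I)$, which forces it to first establish (Lemma \ref{lem3.2}, proved by induction and resting on Lemma \ref{lem3.1}) that $\gcd(Y_{I\cup\{l\}})\in G_S(\gcd(Y_I))$ and that $(x_k,\gcd(Y_I))\nmid y_l$. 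You instead apply Lemma \ref{lem7} exactly once, at the top element $x_m$, to get $[a,y_{i_0}]=x_m$, and then propagate this down the whole Boolean lattice purely formally via the distributive law $[a,(u,v)]=([a,u],[a,v])$ of property (iv), after which the toggling involution kills $N_a$. This bypasses Lemmas \ref{lem3.1} and \ref{lem3.2} entirely for the purposes of the sufficiency proof (the paper's route has the side benefit of making the Boolean-lattice structure of $\{\gcd(Y_I)\}$ explicit, which the authors exploit in their concluding remarks, but it is not needed for the divisibility statement itself). Two small points worth recording if you write this up: (i) in the degenerate sub-case $a=x_m$ the set $A_S(a,x_m)$ is empty and Lemma \ref{lem7} as stated returns nothing, but there $[a,y_{i_0}]=[x_m,y_{i_0}]=x_m$ holds trivially, so no gap arises; (ii) your observation that condition $\mathcal{C}$ passes to subsets (being an element-wise condition with greatest-type divisors computed in $S$) is what legitimizes replacing the hypothesis on $C_S(a,x_m)$ by one on $A_S(a,x_m)$ when invoking Lemma \ref{lem7}, and this matches how the paper itself uses these sets.
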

	
	\begin{proof} 	
		Let $G_S(x_m)=Y_L$ with  $L=\{1,\ldots,l\}$ and $l\ge 4$.  In order to prove Lemma \ref{3.3}, we only need to consider two cases  depending on whether $(x_k,x_m)$ divides $\gcd(Y_L)$ or not.
		
		{\sc Case 1.} $(x_k,x_m)|\gcd(Y_L)$.  For any given index set $I\subset L$,  we have $$\gcd(Y_L)\mid \gcd(Y_I)\mid x_m.$$ And so
		\begin{align}\label{e3.12}
			(x_k,\gcd(Y_L))\mid (x_k,\gcd(Y_I))\mid(x_k,x_m)	
		\end{align}
		Since $(x_k,x_m)|\gcd(Y_L)$ and $\gcd(Y_L)|x_m$, we get $$(x_k,x_m)=((x_k, x_m),\gcd(Y_L) )=(x_k,\gcd(Y_L)). $$ Then by (3.12)  we obtain that 
		$$(x_k,\gcd(Y_L))=(x_k,\gcd(Y_I))=(x_k,x_m).$$
		Therefore 
		$$\left[x_k,\gcd(Y_I)\right]^e=\frac{x_{k}^{e}\gcd(Y_I)^e}{( x_k,\gcd(Y_I)) ^e}=\frac{x_{k}^{e}\gcd(Y_I)^e}{( x_k,x_m ) ^e}.
		$$
		It then follows from Lemmas \ref{lem2} and  \ref{lem6}  that
		\begin{align*}
			g\left(k,m\right)&=\frac{1}{\alpha_{e,S}(x_m)}
			\bigg([ x_k,x_m] ^e+\sum_{I\subset L,I\neq \emptyset}(-1)^{|I|}\big[ x_k,\gcd(Y_I)\big]^e\bigg)\\
			&=\frac{1}{\alpha_{e,S}(x_m)}\bigg(\frac{x_{k}^{e}x_{m}^{e}}{( x_k,x_m )^e}+\sum_{I\subset L,I\neq \emptyset}(-1)^{|I|}\frac{x_{k}^{e}}{( x_k,x_m ) ^e}\gcd(Y_I)^e\bigg)\\
			&=\frac{x_{k}^{e}}{( x_k,x_m) ^e}\frac{1}{\alpha _{e,S}(x_m)}\bigg( x_{m}^{e}+\sum_{I\subset L,I\neq \emptyset}(-1)^{|I|} \gcd(Y_I)^e \bigg) \\
			&=\frac{x_{k}^{e}}{\left( x_k,x_m \right) ^e}\in \mathbb{Z} 
		\end{align*}
		as desired.
		
		{\sc Case 2.}  $\left(x_k,x_m\right)\nmid \gcd(Y_L)$. We first let $(x_k,x_m)=x_m$. Since $G_S(x_m)=Y_L$,  we obtain that for any index set $I\subset L$,
		$$\left[x_k,\gcd(Y_I)\right]^e=x_k^e.$$
	Then by Lemma \ref{lem6} we deduce that
		\begin{align*}
			g\left(k,m\right)&=\frac{1}{\alpha_{e,S}(x_m)}\bigg(x_k^e+\sum_{I\subset L,I\neq \emptyset}(-1)^{|I|}x_k^e\bigg)\\
			&=\frac{x_k^e}{\alpha_{e,S}(x_m)}\bigg(1+\sum_{I\subset L,I\neq \emptyset}(-1)^{|I|}\bigg) \\
			&=\frac{x_k^e}{\alpha_{e,S}(x_m)}\bigg(1+\sum_{s=1}^{|L|}( -1 ) ^s{|L|\choose s}\bigg)\\
			&=0\in \mathbb{Z} 
		\end{align*}
		as required. 
		
		Now  let $(x_k,x_m)<x_m$. Then we derive that the set $$B_S\left(\left(x_k,x_m\right),x_m\right)=A_S\left((x_k,x_m),x_m\right)\cap G_S\left(x_m\right)$$ is nonempty. Since $\left(x_k,x_m\right)\nmid \gcd(Y_L)$, we have  $B_S\left(\left(x_k,x_m\right),x_m\right)\neq G_S\left(x_m\right)$. Then there is an  integer $ j\in L$ such that  $y_j\notin B_S\left(\left(x_k,x_m\right),x_m\right).$
		Without loss of generality, let $y_l\notin B_S\left(\left(x_k,x_m\right),x_m\right).$
		So by the definition of $B_S\left(\left(x_k,x_m\right),x_m\right)$, we deduce that $(x_k,x_m)\nmid y_l.$ In what follows we will prove that $g(k,m)=0$.
		
		We first show that $\left[x_k,x_m\right]=\left[x_k,y_l\right]. $
		Since $B_S\left(\left(x_k,x_m\right),x_m\right)$ is nonempty, we can select an element $y_i$ from  the set  $B_S\left(\left(x_k,x_m\right),x_m\right)$. Then $(x_k,x_m)|y_i$.  Since $\left(x_k,x_m\right)\nmid y_l$ and the set $C_S((x_k,x_m),x_m)$ satisfies the condition $\mathcal{C}$,  we obtain by Lemma $\ref{lem7}$ that
		$\left[\left(x_k,x_m\right),y_l\right]=x_m.$
		And so
		\begin{align}\label{3.9}
			\left[x_k,x_m\right]=\left[x_k,\left[\left(x_k,x_m\right),y_l\right]\right]=\left[x_k,y_l\right]
		\end{align}
		as desired.
		
		Secondly, we will show that 
		$$\left[ x_k,\gcd(Y_I) \right]=\left[ x_k,\gcd(Y_{I\cup\{l\} })\right] $$
		for any given index set $I\subset L\setminus \{l\}$. 
		On the one hand, since  $C_S((x_k,x_m),x_m)$ satisfies the condition $\mathcal{C}$ and  $\left(x_k,x_m\right)\nmid y_l$, by Lemma \ref{lem3.2} (ii), we have
		$\left(x_k,\gcd(Y_I)\right)\nmid y_l,$
		which implies that
		\begin{align}\label{3.10}
			\left(x_k,\gcd(Y_I)\right)\nmid  \gcd(Y_{I\cup\{l\} }).
		\end{align}
		
		On the other hand, since  $I\subset L\setminus \{l\}$ and $C_S((x_k,x_m),x_m)$ satisfies the condition $\mathcal{C}$,  by Lemma \ref{lem3.2} (i),  we obtain that 
		\begin{align}\label{3.11}
			\gcd(Y_{I\cup\{l\} })\in G_S(\gcd(Y_I)).
		\end{align}
		Taking $z=\left(x_k,\gcd(Y_I)\right)$, $y= \gcd(Y_{I\cup\{l\} })$ and $x=\gcd(Y_I)$ in Lemma \ref{lem7}, we deduce from  (\ref{3.10}) and  (\ref{3.11})that 
		\begin{align*}
			\left[\left(x_k,\gcd(Y_I)\right),	\gcd(Y_{I\cup\{l\} })\right]=\gcd(Y_I).
		\end{align*} 
		It then follows that
		\begin{align}\label{3.12}\left[ x_k,\gcd(Y_I) \right] =\left[x_k,[\left(x_k,\gcd(Y_I)\right),	\gcd(Y_{I\cup\{l\} })]\right]=\left[ x_k,\gcd(Y_{I\cup\{l\} }) \right].
		\end{align}
		
		Finally, by Lemma \ref{lem6}, (\ref{3.9}) and (\ref{3.12}), we can deduce that 
		\begin{align*}
			\sum_{x_r|x_m}{c_{rm}\left[ x_k,x_r \right]^e}=&[ x_k,x_m] ^e+\sum_{I\subset L,I\neq \emptyset}(-1)^{|I|}\big[ x_k,\gcd(Y_I)\big]^e\\
			=&\left[ x_k,x_m \right] ^e+\sum_{I\subset L\setminus\{l\},I\neq \emptyset}(-1)^{|I|}\big[ x_k,\gcd(Y_I)\big]^e\\
			&-\left[ x_k,y_l \right] ^e+\sum_{I\subset L\setminus\{l\},I\neq \emptyset}(-1)^{|I|+1}\left[x_k, \gcd(Y_{I\cup\{l\} }) \right]^e\\
			=&0.
		\end{align*}
		Therefore, by (\ref{3}), we can immediately conclude that $ g(k,m) = 0 \in \mathbb{Z} $ as desired.
		
	This completes the proof of Lemma \ref{lem3.3}. \hfill\ensuremath{\mbox{$\Box$}}
	\end{proof}\\
	
	We are now in a position to give the proof of the sufficiency of Theorem \ref{thm1}.\\
	
	\noindent{\bf Proof of the sufficiency.} Assume that  $S$ satisfies the condition $\mathcal{C}$. By Lemma $\ref{1}$ we derive that
	\begin{align}
		\left( \left[ S^e \right] \left( S^e \right) ^{-1} \right) _{ij}&=\sum_{t=1}^n{\left[ x_i,x_t \right] ^e}\sum_{\begin{subarray}{c}
				x_t|x_k\\
				x_j|x_k\\
		\end{subarray}}{\frac{c_{tk}c_{jk}}{\alpha _{e,S}\left( x_k \right)}}\nonumber\\
		&=\sum_{x_j|x_k}{c_{jk}}\sum_{x_t|x_k}{\frac{c_{tk}\left[ x_i,x_t \right] ^e}{\alpha _{e,S}\left( x_k \right)}}\nonumber\\
		&=\sum_{x_j|x_k}{c_{jk}g\left( i,k \right)},\label{3.17}
	\end{align} 
	for all integers $1\leq i,j\leq n$. By the definition of $c_{jk}$, we have  $c_{jk}\in \mathbb{Z}$. Since the gcd-closed set $S$ satisfies the condition $\mathcal{C}$, by Lemmas \ref{lem2.8} and \ref{lem3.3}, we get $g\left(i,j\right)\in\mathbb{Z}$  for all integers  $1\leq i,j\leq n$.  It then follows from (\ref{3.17}) that $$\left( \left[ S^e \right] \left( S^e \right) ^{-1} \right) _{ij}\in \mathbb{Z},~~~\forall~1\leq i,j\leq n, $$
	which implies that $(S^e)\mid [S^e]$.
	
	The sufficiency of  Theorem \ref{thm1}
	is proved.
	\hfill\ensuremath{\mbox{$\Box$}}

	\subsection{Proof of the necessity of Theorem \ref{thm1}}
	
	In this subsection, we present the proof of the necessity of Theorem \ref{thm1}. We begin with a key lemma about an alternating sum for determining the non-integrality of $g(s,m)$ for some integers $1\le s,m\le n$.
	\begin{lemma}\label{lem8}
		Let $x, e,k$ be  given positive integers and  $K=\{1,\ldots,k\}$ be an index set. 
		If $y_1,\ldots,y_k$ are proper  divisors of $x$, then
		\begin{align}\label{e3.13}
			{x}^e+\sum_{I\subset K,I\neq \emptyset}(-1)^{|I|}\gcd(Y_I)^e>0.
		\end{align}
		\end{lemma}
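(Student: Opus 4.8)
The plan is to reduce inequality (\ref{e3.13}) to the positivity of the Jordan-totient-type function $\xi_e*\mu$, which is exactly the ingredient (``Example 1 of \cite{BL93LMA}'') already invoked in the proof of Lemma \ref{lem3}. The starting point is the Dirichlet identity $n^e=\sum_{d\mid n}(\xi_e*\mu)(d)$, valid for every positive integer $n$. Applying it gives $x^e=\sum_{d\mid x}(\xi_e*\mu)(d)$ and, for each nonempty $I\subseteq K$, $\gcd(Y_I)^e=\sum_{d\mid \gcd(Y_I)}(\xi_e*\mu)(d)$. I would substitute these expansions into the left-hand side of (\ref{e3.13}) and interchange the order of summation, so as to read off the coefficient of each $(\xi_e*\mu)(d)$. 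The real ``insight'' of the argument is precisely this linearization: although the alternating sum of the $\gcd(Y_I)^e$ looks intractable, it becomes transparent once each term is written as a positive combination of the values $(\xi_e*\mu)(d)$.

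To evaluate the coefficient, I would observe that $d\mid\gcd(Y_I)$ holds exactly when $d\mid y_i$ for every $i\in I$, i.e. exactly when $I\subseteq K_d$, where $K_d:=\{\,i\in K:d\mid y_i\,\}$. Hence the coefficient of $(\xi_e*\mu)(d)$ equals
\[
[d\mid x]+\sum_{\emptyset\neq I\subseteq K_d}(-1)^{|I|}
=[d\mid x]+\big([K_d=\emptyset]-1\big),
\]
by the elementary identity $\sum_{I\subseteq K_d}(-1)^{|I|}=0^{|K_d|}$. Phrasing everything through index subsets $I\subseteq K_d$, rather than through the values $\gcd(Y_I)$ directly, is what lets the argument run cleanly even when the $y_i$ are not distinct, so that $Y_K$ is a genuine multiset. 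The two cases then collapse: if $d$ divides some $y_i$ the coefficient is $[d\mid x]-1=0$ (since $d\mid y_i\mid x$ forces $[d\mid x]=1$), while if $d$ divides no $y_i$ the coefficient is just $[d\mid x]$.

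Consequently the alternating sum telescopes to
\[
\sum_{\substack{d\mid x\\ d\nmid y_i\ (1\le i\le k)}}(\xi_e*\mu)(d),
\]
which is precisely an $\alpha_{e,S}$-type expression of the shape (\ref{2}); the content of the lemma is that this remains positive for \emph{arbitrary} proper divisors $y_1,\dots,y_k$, with no gcd-closedness or greatest-type hypothesis imposed. Positivity then follows from two facts: every summand $(\xi_e*\mu)(d)$ is strictly positive (Example 1 of \cite{BL93LMA}), and the index set of the sum is nonempty, since $d=x$ always satisfies $x\mid x$ and $x\nmid y_i$ for all $i$. It is here that the hypothesis that each $y_i$ is a \emph{proper} divisor of $x$ is essential, as it guarantees $x\nmid y_i$. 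Thus the whole expression is bounded below by $(\xi_e*\mu)(x)>0$, establishing (\ref{e3.13}). I expect the only point needing genuine care to be the coefficient bookkeeping via inclusion--exclusion together with the multiset subtlety; once that is settled, strict positivity is immediate.
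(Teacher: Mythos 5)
Your proof is correct, but it takes a genuinely different route from the paper's. The paper proves the lemma by forming the auxiliary gcd-closed set $Y=\bigcup_{I\subset K}\{(x,\gcd(Y_I))\}$, identifying $G_Y(x)$ as a sublist $Y_L$ of the $y_i$'s, invoking Hong's formula (Lemma \ref{lem2}) together with the positivity of $\alpha_{e,Y}(x)$ (Lemma \ref{lem3}, which rests on the positive definiteness of the power GCD matrix), and then showing that each index $j\notin L$ contributes nothing because $y_j$ divides some greatest-type divisor $y_i$, so the extra terms cancel in pairs. You instead linearize directly: expanding every power via $n^e=\sum_{d\mid n}(\xi_e*\mu)(d)$ and reading off the coefficient of $(\xi_e*\mu)(d)$ by inclusion--exclusion over $K_d=\{i:d\mid y_i\}$, you identify the whole expression exactly as $\sum_{d\mid x,\ d\nmid y_i\ \forall i}(\xi_e*\mu)(d)$, which is at least $(\xi_e*\mu)(x)=x^e\prod_{p\mid x}(1-p^{-e})>0$. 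Your coefficient bookkeeping is sound (the case split on whether $K_d$ is empty is handled correctly, and working with index subsets does dispose of the multiset issue flagged in Remark 1), and the properness of the $y_i$ is used exactly where it must be, to guarantee the term $d=x$ survives. What your approach buys is a sharper and more self-contained statement: an exact evaluation of the alternating sum as a sum of strictly positive Jordan-totient values, with an explicit lower bound, needing only the elementary positivity of $\xi_e*\mu$ rather than the positive-definiteness machinery; it is in effect a direct generalization of Hong's Lemma \ref{lem2} from greatest-type divisors to arbitrary proper divisors. What the paper's approach buys is economy within its own framework, since it reuses Lemmas \ref{lem2} and \ref{lem3} verbatim instead of redoing the M\"obius computation. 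The only cosmetic quibble is your attribution of the positivity of $(\xi_e*\mu)(d)$ to ``Example 1'' of Bourque--Ligh, which that paper uses for positive definiteness of $(S^e)$; the fact itself is the standard product formula for the Jordan totient and needs no citation.
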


	\noindent\textbf{Proof.}  
	We first define the set  
	$$
	Y:=\bigcup_{I\subset K}\{(x, \gcd(Y_I))\}.
	$$
	Clearly, the set $Y$ is a gcd-closed set. By the definition of the greatest-type divisor, and noticing that  $y_1,\ldots,y_k$ are proper divisors of $x$, we can conclude that the set  $G_Y(x)$ is nonempty and  comprises certain elements from \( y_1, \ldots, y_k \). 
	Without loss of generality, we may let $G_K(x)=Y_L$ with $L=\{1,\ldots,l\}$. So $1\leq |G_Y\left(x\right)|=l\leq k$. 
	For convenience, for any given integer $j$ with $l\le j\le k$, we define  
	\begin{align*}
		\alpha_j:={x}^e+\sum_{I\subset J,I\neq \emptyset}(-1)^{|I|}\gcd(Y_I)^e, 
	\end{align*}
	where $J=\{1,...,j\}$.
	Since $Y$ is a gcd-closed set and  $G_Y(x)=Y_L$, we derive from Lemmas \ref{lem3} and  \ref{lem2}  that 
	\begin{align*}
		\alpha_l={x}^e+\sum_{I\subset L,I\neq \emptyset}(-1)^{|I|}\gcd(Y_I)^e=\alpha_{e,Y}(x)>0.
	\end{align*}
	So in order to prove Lemma \ref{lem8},
	we only need to  prove that $\alpha_k=\alpha_l$. Clearly if $l=k$, then $\alpha_k=\alpha_l$. 
	
	Now we deal with the remaining case $1\le l \le k-1$. 
	To prove $\alpha_k=\alpha_l$, it suffice to show that for each integer  $j$ with $l+1\le j\le k$, the equation $$\alpha_{j-1}=\alpha_{j}$$
	holds  which will be done in what follows.
	
	Let $j$ be any given integer with   $l+1\le j\le k$.  Then by $G_Y(x)=Y_L$ and the definition of $Y$, we deduce that  there  must exist  an integer $i$ such that $y_{i}\in Y_L$ and $y_{j}\mid y_{i}$. This means that for any index set $I\subset J\setminus \{i,j\}, $
	\begin{align}\label{e3.14}
		\gcd(Y_{I\cup\{j\}})=\gcd(Y_{I\cup\{i,j\}}).
	\end{align} 
	It then follows from the definition of 	$\alpha_{j}$ and (\ref{e3.14}) that 
	\begin{align*}
		\alpha_{j}&=\alpha_{j-1}-y_j^e+\sum_{I\subset J\setminus\{j\}}(-1)^{|I|+1}\gcd(Y_{I\cup\{j\}})^e \\
		&=\alpha_{j-1}+(y_i,y_j)^e-y_j^e+\sum_{I\subset J\setminus\{i,j\}}\bigg((-1)^{|I|+1}\gcd(Y_{I\cup\{j\}})^e+(-1)^{|I|+2}\gcd(Y_{I\cup\{i,j\}})^e\bigg)\\
		&=\alpha_{j-1}
	\end{align*}
	as desired. 
	
	The proof of  Lemma \ref{lem8} is complete.\hfill\ensuremath{\mbox{$\Box$}}\\
	
	\noindent{\bf Remark 1.} Note that the set $\{y_1,\ldots,y_k\}$ is allowed to be a multiset with 
	$y_i=y_j$ for some integers $1\le i\neq j\le k$.
	
	\begin{lemma}\label{lem9}
		Let $S=\{x_1,\ldots,x_n\}$ be a gcd-closed set,  $x_m\in  S$ with $|G_S(x_m)|\ge 4$. If $x_m$ does not satisfy the condition $\mathcal{C}$,  then there exists an element $x_k$ in $ S$ such that $g\left(k, m\right)\notin \mathbb{Z}$. 
	\end{lemma}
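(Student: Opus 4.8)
The plan is to write $g(k,m)$ as a ratio of two alternating sums of the type controlled by Lemma~\ref{lem8}, and then to exploit the failure of $\mathcal{C}$ at $x_m$ to choose $x_k\in S$ forcing $0<g(k,m)<1$. Put $G_S(x_m)=Y_L$ with $L=\{1,\dots,l\}$ and $l\ge 4$. By Lemma~\ref{lem6} the numerator of $g(k,m)$ in (\ref{3}) equals
\[
N(k):=[x_k,x_m]^e+\sum_{I\subset L,\,I\neq\emptyset}(-1)^{|I|}\big[x_k,\gcd(Y_I)\big]^e,
\]
and by Lemma~\ref{lem2} the denominator is $\alpha_{e,S}(x_m)$, which is strictly positive by Lemma~\ref{lem3}. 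Property~(iv) rewrites $[x_k,\gcd(Y_I)]=\gcd\big(\bigcup_{i\in I}\{[x_k,y_i]\}\big)$, so $N(k)$ is itself an alternating sum of exactly the shape appearing in Lemma~\ref{lem8}, with top element $[x_k,x_m]$ and entries $[x_k,y_i]$, $i\in L$. Hence it suffices to produce a single $x_k\in S$ with $0<N(k)<\alpha_{e,S}(x_m)$.

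Since $x_m$ violates $\mathcal{C}$, fix distinct $y_a,y_b\in G_S(x_m)$ witnessing this: either (A) $[y_a,y_b]\neq x_m$, so even the weaker condition $\mathcal{M}$ fails, or (B) $[y_a,y_b]=x_m$ but $(y_a,y_b)\notin G_S(y_a)\cap G_S(y_b)$. In either case I would take $x_k\in S$ to be a proper divisor of $x_m$ — a suitable $\gcd$ of a subset of $G_S(x_m)$ attached to the witness (in case~(B), one uses the element of $S$ lying strictly between $(y_a,y_b)$ and $y_a$) — arranged so that (i) $[x_k,y_i]$ is a proper divisor of $x_m$ for every $i\in L$, and (ii) $x_k\nmid y_{i_0}$ for at least one $i_0$, i.e. the entry $[x_k,y_{i_0}]$ is strictly larger than $y_{i_0}$. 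Gcd-closedness of $S$ guarantees $x_k\in S$, and the availability of such an $x_k$ is precisely the trace of the failure of $\mathcal{C}$: when $\mathcal{C}$ holds, the pairing used in Case~2 of Lemma~\ref{lem3.3} collapses the numerator to $0$, and it is exactly the witness $y_a,y_b$ that obstructs this collapse. Granting (i), Lemma~\ref{lem8} immediately yields $N(k)>0$.

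The step I expect to be the main obstacle is the strict upper bound $N(k)<\alpha_{e,S}(x_m)$. Because $x_k\mid x_m$, both sums share the top term $x_m^e$ and differ only through the replacement of each entry $y_i$ by its multiple $[x_k,y_i]$ (with $y_i\mid[x_k,y_i]\mid x_m$, strict for $i=i_0$); thus the inequality reduces to the monotonicity statement that \emph{enlarging the greatest-type divisors towards $x_m$ while keeping them proper divisors strictly decreases the alternating sum}. The plan for this is to telescope along one-entry enlargements $y_i\rightsquigarrow[x_k,y_i]$: a single-entry change alters the sum by $f(c')-f(c)$, where $f(c)=c^e+\sum_{\emptyset\neq I\subset L\setminus\{i\}}(-1)^{|I|}\gcd\big(\bigcup_{j\in I}\{(y_j,c)\}\big)^e$ is again a Lemma~\ref{lem8}-type sum with top $c$ and entries $(y_j,c)$, and one shows $f$ is strictly increasing along $c\mid c'\mid x_m$. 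This is where the hypothesis $l\ge 4$ makes the argument genuinely harder than the $l\le 3$ case of Lemma~\ref{lem2.9}: the required monotonicity is \emph{not} a direct instance of the collapse in Lemma~\ref{lem8} (that collapse only removes divisors lying below existing greatest-type divisors, whereas here the entries move \emph{up}), so it demands an extension of those positivity ideas to a whole family of simultaneously interacting divisors. Once the bound is secured, $0<g(k,m)<1$, whence $g(k,m)\notin\mathbb{Z}$ and Lemma~\ref{lem9} follows.
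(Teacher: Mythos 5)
Your overall strategy --- sandwiching $0<N(k)<\alpha_{e,S}(x_m)$ and getting the lower bound from Lemma~\ref{lem8} applied to the proper divisors $[x_k,y_i]$ of $x_m$ --- is exactly the paper's strategy, and your reduction of the numerator via Lemmas~\ref{lem6} and~\ref{lem2} is correct. But the proposal has two genuine gaps, and the second is fatal as written. First, the choice of $x_k$ is left vague, and arranging your condition~(i) (that $[x_k,y_i]<x_m$ for \emph{every} $i\in L$) is not automatic: if you take $x_k=(y_a,y_b)$ for a witness pair with $[y_a,y_b]<x_m$, then for any other $y_j$ one has $[x_k,y_j]=\big([y_a,y_j],[y_b,y_j]\big)$ by property~(iv), which equals $x_m$ whenever both of those lcms do. The paper has to work for this: in the case where $\mathcal{M}$ fails it picks the index whose set of ``bad partners'' $Y_L(i)$ has minimal cardinality $a$, sets $x_k=\gcd(Y_{L\setminus W})$ for a carefully chosen $W\subset\{1,\dots,a\}$, and uses the minimality of $a$ to prove the claim $[y_i,x_k]<x_m$ for all $i$; in the other case it takes the \emph{smallest} element $x_c$ strictly between $(y_1,y_2)$ and $y_2$ and sets $x_k=(x_c,\gcd(Y_{L\setminus\{1,2\}}))$.

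The main gap is the upper bound. You correctly identify it as the obstacle but then propose to prove it by a general monotonicity principle (``enlarging the entries towards $x_m$ strictly decreases the alternating sum''), telescoped over single-entry changes --- and you never establish that monotonicity. It is not a consequence of Lemma~\ref{lem8}, and I see no reason it holds for an arbitrary $x_k$: the individual differences $[x_k,\gcd(Y_{I\cup\{i\}})]^e-\gcd(Y_{I\cup\{i\}})^e$ enter with alternating signs and, for a generic $x_k$, have no common structure that controls the sign of their sum. The paper avoids any such monotonicity claim entirely: its choice of $x_k$ is engineered precisely so that $\big(x_k,\gcd(Y_{I\cup\{i\}})\big)=\gcd(Y_L)$ for every index set occurring in the difference $\alpha_{e,S}(x_m)-N(k)$, whence each term equals the \emph{constant} factor $\big(x_k^e/\gcd(Y_L)^e-1\big)$ times $\gcd(Y_{I\cup\{i\}})^e$; the whole difference then factors as that positive constant (positive because $x_k>\gcd(Y_L)$, which must also be proved) times a sum of Lemma~\ref{lem8}-type expressions built from the proper divisors $(y_i,y_j)$ of $y_i$, each of which is positive. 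In short, the inequality is obtained by an exact algebraic identity tied to a delicate choice of $x_k$, not by a monotonicity argument, and your proposal as it stands does not supply a proof of the step it itself flags as decisive.
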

	
	\noindent\textbf{Proof.} Let $G_S(x_m)=Y_L$ with $L=\{1,\ldots,l\}$ and $l\ge 4$. Since $x_m$ does not satisfy the condition $\mathcal{C}$, we consider the following two cases.
	
	{\sc Case 1.} {\it There exists two integers $i$ and $j$ with $1\leq i<j\leq l$ such that $\left[y_i, y_j\right]<x_m$.} 
	
	Firstly, for any given integer $1\leq i\leq l$, we define  $$Y_{L}(i):=\left\{y_j:~\left[y_i,y_j\right]<x_m,~j\in L,~j\neq i
	\right\}.$$ 
	In Case 1,  we know that there must exist an integer $i\in L$, such that $Y_L(i)$ is nonempty. 
	Then we may let 
	$$a:=\min\{|Y_L(i)|:~Y_L(i)\neq \emptyset,~1\le i\le l\}.$$
	It is clear that  $1\le a<l$. Without loss of generality, we may let $Y_L(1)$ attain the minimal cardinality $a$, i.e.   $|Y_L(1)|=a$, and let
	\begin{align}\label{3.15}
		[y_1,y_j]<x_m,~\forall~2\le j\le a+1
	\end{align}
	and let
	\begin{align}\label{e3.15}
		[y_1,y_j]=x_m,~\forall~ a+2\le j\le l
	\end{align}
	if $a+2\le l$. Denote 
	$$Y:=\bigcup_{I\subset  A}\{\gcd(Y_{I\cup (L\setminus A)})\},$$
	where $A=\{1,\ldots,a\}$.
	Since 
	$S$ is a gcd-closed set,   we get $Y$ is a subset of $S$. 
	
	Now we claim that for any given element $x\in Y$, 
	\begin{align}\label{3.21}
		[y_i,x]<x_m,~\forall~ 1\le i\le l.
	\end{align}
	If $a+1\le i\le l$, then by the definition of $Y$, we get $x\mid y_i$. Using  the fact that $y_i$ is the  greatest-type divisor of $x_m$, we get
	$[y_i,x]=y_i<x_m$ as required.
	If $1\leq i\leq a$,  then by (\ref{3.15}) and the definition of $Y_L(i)$,  we deduce that $Y_L(i)\neq \emptyset$.  Hence  \begin{align}\label{e3.16}
		|Y_L(i)|\geq\min\limits_{j\in L, Y_L(j)\ne \emptyset}|Y_L(j)|=|Y_L(1)|= a.
	\end{align}	
	Note that  $y_i\not\in Y_L(i)$. Then we have $|Y_L(i)\cap Y_A|<a$. Thus by (\ref{e3.16}), we derive that $Y_L(i)\cap Y_{L\setminus A}\neq \emptyset$, i.e. 	
	there exists an integer $j_0\in L\setminus A$  such that $\left[y_i,y_{j_0}\right]<x_m$.	
It subsequently follows from the definition of \( Y \) and the fact that \( x \in Y \) that
	$$\left[y_i,x\right]\leq \left[y_i,y_{j_0}\right]<x_m$$
	as desired. The Claim is proved.
	
	Next, we  prove that
	\begin{align}\label{3.23}
		\gcd(Y_{L\setminus A})>\gcd(Y_L).
	\end{align}	 
	By (\ref{3.15}) and (\ref{e3.15}), we have
	\begin{align*}
		\bigcup_{j\in L\setminus A}\{[y_1,y_j]\}=
		\left\{\begin{array}{cc}
			\{[y_1,y_{a+1}]\}, &~~\text{if }~ a+1=l,\\
			\{[y_1,y_{a+1}],x_m\}, &~~\text{if }~ a+2\le l.
		\end{array}
		\right.
	\end{align*}
	Then we obtain that 
	\begin{align*}
		\left[y_1, \gcd(Y_{L\setminus A})\right]
		&=\gcd\bigg(\bigcup_{j\in L\setminus A}\{[y_1,y_j]\}\bigg)
		=\left[y_1,y_{a+1}\right]>y_1
	\end{align*}	
	since $\{y_1,y_{a+1}\}\in G_S(x_m)$. It then follows from the fact $\gcd(Y_L)\mid \big(y_1,\gcd(Y_{L\setminus A})\big)$ that 
	$$
	\frac{\gcd(Y_{L\setminus A})}{\gcd(Y_L)}\ge \frac{\gcd(Y_{L\setminus A})}{\big(y_1,\gcd(Y_{L\setminus A})\big)}=\frac{\left[y_1, \gcd(Y_{L\setminus A})\right]}{y_1}>1,
	$$
	which implies that (\ref{3.23}) is proved.

	Since $\gcd(Y_{L\setminus A})>\gcd(Y_L)$,  we  deduce that there  exists  an unique  integer $b$ with $1\le b\leq a$  such that for all index sets $I\subset A$ with $|I|=b$,
	\begin{align}\label{e3.23}
		\gcd(Y_{I\cup (L\setminus A) })=\gcd(Y_L),
	\end{align}
	and there is an index set  $I_0\subset A$   satisfying that $|I_0|=b-1$ and 
	\begin{align*}
		\gcd(Y_{I_0\cup(L\setminus A)  })>\gcd(Y_L).
	\end{align*}
	Without loss of generality, we may let 
	\begin{align}\label{e3.24}
		(y_{a-b+2},\ldots,y_a,y_{a+1},\ldots,y_l)> \gcd(Y_L). 
	\end{align}
	Denote $w=a-b+1$ and $W=\{1,\ldots,w\}$. Then (\ref{e3.24}) can be expressed concisely as 
	\begin{align}\label{3.33}
		\gcd(Y_{L\setminus W})> \gcd(Y_L). 
	\end{align}
	Since $\gcd(Y_{L\setminus W})\in Y\subset S$ and $S$ is a gcd-closed set, there is an unique integer $k$ such that  $$x_k=\gcd(Y_{L\setminus W}).$$
	In what follows we will prove that $g(k,m)\not\in\mathbb{Z}$, which implies that Lemma \ref{lem9} is true for the Case 1. 
	
	On the one hand, since  $x_k\in Y$ and $x_k\mid x_m$,  we obtain from the  Claim (\ref{3.21}) that $[y_i,x _k]<x_m$ for any integers $i\in L$, that is $[y_1,x_k],\ldots,[y_l,x_k]$ are proper divisors of  $x_m$.  Since $S$ is a gcd-closed set, $G_S(x_m)=Y_L$ and  $x_k\mid x_m$, we can derive from  Lemmas \ref{lem6},  \ref{lem8} and Remark 1  that 
	\begin{align}
		\sum_{x_r|x_m}{c_{rm}\left[ x_k,x_r \right] ^e}&=\left[ x_k,x_m \right] ^e+\sum_{I\subset L,I\neq \emptyset}{( -1 ) ^{|I|}\left[ x_k,\gcd(Y_I)\right]^e}\label{3.24}\\
		&=x_m^e+\sum_{I\subset L,I\neq \emptyset}( -1 ) ^{|I|}\gcd\bigg(
		\bigcup_{i\in I}\{[x_k,y_i]\}\bigg)^e\nonumber\\
		&>0.	\label{e3.17}
	\end{align}
	
	On the other hand,  we deduce from Lemma \ref{lem2} that 
	\begin{align}\nonumber
		\alpha _{e, S}\left( x_m \right)&=x_{m}^{e}+\sum_{I\subset L,I\neq \emptyset}(-1) ^{|I|}\gcd(Y_I)^e\\
		&=\beta(x_m)+\sum_{
			\begin{subarray}{c} 
				I\subset W,I\neq \emptyset\\ 
				J\subset L\setminus W
			\end{subarray}	
		}(-1) ^{|I|+|J|}\gcd(Y_{I\cup J})^e\nonumber\\
		&=\beta(x_m)+\sum_{i=1}^w\sum_{I\subset \{i+1,\ldots,l\}}(-1) ^{|I|+1}\gcd(Y_{I\cup \{i\}})^e,
		\label{e3.18}
	\end{align}
	where $$\beta(x_m):=x_{m}^{e}+\sum_{I\subset L\setminus W,I\neq \emptyset}(-1) ^{|I|}\gcd(Y_I)^e.$$
	Since $x_k=\gcd(Y_{L\setminus W})$, we have  $[x_k,\gcd(Y_I)]=\gcd(Y_I)$ for any index set $I\subset L\setminus W$.
	Then by (\ref{3.24}), we derive that  	\begin{align}\label{e3.19}
		\sum_{x_r|x_m}{c_{rm}\left[ x_k , x_r \right] ^e}
		&=\beta(x_m)+\sum_{
			\begin{subarray}{c} 
				I\subset W,I\neq \emptyset\\ 
				J\subset L\setminus W
			\end{subarray}	
		}(-1) ^{|I|+|J|}[x_k, \gcd(Y_{I\cup J})]^e \nonumber\\
		&=\beta(x_m)+\sum_{i=1}^w\sum_{I\subset \{i+1,\ldots,l\}}(-1) ^{|I|+1}[x_k,\gcd(Y_{I\cup \{i\}})]^e.
	\end{align}
	For each  integer $1\le i\le w$, we denote the index set $$I(i)=\{i, w+1,\ldots, a\}.$$ Then $I(i)=\{i\}\cup (A\setminus W)$ and $|I(i)|=b$. So by the definition of $b$ and (\ref{e3.23}), we get   $$\gcd(Y_{I(i)\cup (L\setminus A) })=\gcd(Y_L).$$
	It then follows from   $x_k=\gcd(Y_{L\setminus W})$ that 
	\begin{align}\nonumber
		(x_k,y_i)&=(\gcd(Y_{L\setminus W}),y_i)=\gcd(Y_{\{i\}\cup L\setminus W})\\
		&=\gcd(Y_{\{i\}\cup(A\setminus W)\cup (L\setminus A)})=\gcd(Y_{I(i)\cup (L\setminus A)})
		\nonumber\\
		&=\gcd(Y_L).\label{3.38}
	\end{align}
	Moreover, for any index set $I\subset \{i+1,\ldots,l\}$, since $x_k=\gcd(Y_{L\setminus W})$,   we  obtain by (\ref{3.38}) that
	\begin{align}\label{e3.20}
		\big(x_k,\gcd(Y_{I\cup \{i\}})\big)&=\big(\gcd(Y_{L\setminus W}),\gcd(Y_{I\cup \{i\}}) \big)=\gcd(Y_{ I\cup \{i\}\cup (L\setminus W) })\nonumber\\
		&=\big(\gcd(Y_{I}),\gcd(Y_{\{i\}\cup (L\setminus W)}) \big)=\big(\gcd(Y_{I}),\gcd(Y_{L}) \big)
		\nonumber\\
		&=\gcd(Y_L).
	\end{align}  
	Then by (\ref{e3.18})-(\ref{e3.20}), we deduce that
	\begin{align}\nonumber
		&\alpha _{e, S}(x_m)-\sum_{x_r|x_m}{c_{rm}\left[ x_k , x_r \right] ^e}\\
		=&\sum_{i=1}^w\sum_{I\subset \{i+1,\ldots,l\}}(-1) ^{|I|}\Big([x_k,\gcd(Y_{I\cup \{i\}})]^e-\gcd(Y_{I\cup \{i\}})^e\Big)\nonumber\\
		=&\sum_{i=1}^w\bigg(\big([x_k,y_i]^e-y_i^e\big)+\sum_{I\subset \{i+1,\ldots,l\},I\neq \emptyset}(-1) ^{|I|}
		\Big([x_k,\gcd(Y_{I\cup \{i\}})]^e-\gcd(Y_{I\cup \{i\}})^e
		\Big)\bigg)\nonumber\\
		=&\sum_{i=1}^w\Bigg(\Big(\frac{x_k^e}{\gcd(Y_L)^e}-1\Big)y_i^e+\sum_{I\subset \{i+1,\ldots,l\},I\neq \emptyset }(-1) ^{|I|}
		\Big(\frac{x_k^e}{\gcd(Y_L)^e}-1\Big)\gcd(Y_{I\cup \{i\}})^e
		\Big)\Bigg)
		\nonumber\\
		=&\bigg(\frac{x_k^e}{\gcd(Y_L)^e}-1\bigg)\sum_{i=1}^w
		\bigg(
		y_i^e+\sum_{I\subset \{i+1,\ldots,l\},I\neq \emptyset}(-1)^{|I|}\gcd(Y_{I\cup \{i\}})^e
		\bigg)\nonumber\\
		=&\bigg(\frac{x_k^e}{\gcd(Y_L)^e}-1\bigg)\sum_{i=1}^w
		\bigg(	y_i^e+\sum_{I\subset \{i+1,\ldots,l\},I\neq \emptyset}(-1)^{|I|}\gcd
		\Big(\bigcup_{j\in I}\{(y_i,y_j)^e\}\bigg).
		\label{e3.21}
	\end{align}
	Since $G_S(x_m)=\{y_1,\ldots,y_l\}$, we have $(y_i,y_{i+1}),\ldots,(y_i,y_{l})$ are proper divisors of $y_i$   for any given integer $1\le i\le w$. Then by Lemma \ref{lem8}, we deduce that for each integer $1\le i\le w$, 
	\begin{align}	\label{3.35e}	y_i^e+\sum_{I\subset \{i+1,\ldots,l\},I\neq \emptyset}(-1)^{|I|}\gcd
		\Big(\bigcup_{j\in I}\{(y_i,y_j)^e\}\Big)>0.
	\end{align}
	Since $x_k>\gcd(Y_L)$, we obtain from (\ref{e3.21}) and (\ref{3.35e})
	that
	\begin{align}
		\alpha _{e, S}(x_m)-\sum_{x_r|x_m}{c_{rm}\left[ x_k , x_r \right] ^e}>0\label{e3.22}
	\end{align}
	Hence by (\ref{e3.17}) and (\ref{e3.22}) we get 
	$$0<g\left( k,m \right)=\frac{1}{\alpha _{e,S}\left( x_m \right)}\sum_{x_r|x_m}{c_{rm}\left[x_k,x_r \right] ^e}<1,$$
	which means that $g(k,m)\not\in \mathbb{Z}$ as required. 
	
	This completes the proof of Lemma \ref{lem9} for  Case 1.
	
	{\sc Case 2.} {\it $\left[y_i, y_j\right]=x_m$  for all integers $1\leq i<j\leq l$, but there exists two integers $i$ and  $j$ with $1\leq i <j\leq l$ such that $\left(y_i, y_j\right)\notin G_s\left( y_i \right) \cap G_s\left( y_j \right) $.}
	
	Without loss of generality, we let $\left(y_1, y_2\right)\notin G_s\left(y_2\right)$. Then by the definition of greatest-type divisor, we know that the set 
	$$U:=\left\{u\in S:\left(y_1, y_2\right)|u|y_2, u\ne\left(y_1, y_2\right), u\ne y_2\right\}$$ 
	is nonempty.    Let $x_c$ be the smallest element of $U$. Since $S$ is a gcd-closed set, there exists an unique integer $d$ such that 
	$$x_d:=\left(x_c, \left(y_3, \ldots, y_l\right)\right)=(x_c,\gcd(Y_{L\setminus\{1,2\}})).$$  In what follows, we will show that $g(d,m)\not\in \mathbb{Z}$.
	
	We first  prove that $$[y_1,x_c]=[y_1,x_d]<x_m.$$ On the one hand, we suppose that $[y_1,x_c]=x_m$.  Note that $(y_1,(y_1,y_2))|\left(y_1, x_c\right)|\left(y_1,y_2\right)$ since $\left(y_1,y_2\right)|x_c|y_2$.  
	That is $\left(y_1,x_c\right)=\left(y_1,y_2\right)$. It then follows from  $[y_1,y_2]=x_m$ that 
	$$x_c=\frac{[y_1,x_c](y_1,x_c)}{y_1}=\frac{[y_1,y_2](y_1,y_2)}{y_1}=y_2,$$
	which contradicts to the fact $x_c\neq y_2$ since $x_c\in U$. Hence $[y_1,x_c]<x_m$. 
	On the other hand, noticing that  $\left[y_1,y_j\right]=x_m$ for all integers $2\le j\le l$, we get  $$\big[y_1,\gcd(Y_{L\setminus\{1,2\}})\big]
	=
	\gcd\bigg(\bigcup_{j=3}^l\{[y_1,y_j]\}\bigg)=x_m.$$ 
	Using the fact that $x_c\mid y_2$,  we obtain that 
	\begin{align*}
		\left[y_1,x_d\right]=\big[y_1,(x_c,\gcd(Y_{L\setminus\{1,2\}}))
		\big]
		=\big(\left[y_1,x_c\right],\big[y_1,\gcd(Y_{L\setminus\{1,2\}})\big]\big)
		=\left[y_1,x_c\right]
	\end{align*}
	as desired.

	Next, we will show that $(y_1,x_d)=\gcd(Y_L)$ and $x_d>\gcd(Y_L)$.  Since $x_c\in U$, we have $(y_1,y_2)\mid x_c\mid y_2$. Then by the definition of $x_d$, we get $\gcd(Y_L)\mid x_d\mid \gcd(Y_{L\setminus\{1\}})$.
	So $(y_1,\gcd(Y_L))|(y_1,x_d)|\gcd(Y_L),$ which means that  $$(y_1,x_d)=\gcd(Y_L).$$
	Since $x_c\in U$, we have $x_c\nmid y_1$. Otherwise, if $x_c\mid y_1$, then  we have $x_c\mid (y_1,y_2)$ since $x_c\mid y_2$. This contradicts to  $x_c\neq (y_1,y_2)$. Using the fact that $x_c\nmid y_1$, we get $[y_1,x_c]>y_1$. Then by $(y_1,x_d)=\gcd(Y_L)$ and $[y_1,x_c]=[y_1,x_d]$, we derive that 
	$$
	x_d=\frac{[y_1, x_d]}{y_1}(y_1,x_d)=\frac{[y_1, x_c]}{y_1}\gcd(Y_L)>\gcd(Y_L)
	$$
	as required. 
	
	Secondly,  noticing that 
	$S$ is a gcd-closed set and  $G_S(x_m)=Y_L$,  we obtain by Lemma \ref{lem6} that 
	\begin{align}
		\sum_{x_r|x_m}{c_{rm}\left[ x_d,x_r \right] ^e}&=\left[ x_d,x_m \right] ^e+\sum_{I\subset L,I\neq \emptyset}( -1 ) ^{|I|}\left[ x_d,\gcd(Y_I)\right]^e\label{3.35}\\
		&=x_m^e+\sum_{I\subset L,I\neq \emptyset}( -1 ) ^{|I|}\gcd\bigg(
		\bigcup_{i\in I}\{[x_d,y_i]\}\bigg)^e.	\label{e3.30}
	\end{align}
	Clearly,  $[y_i,x_d]=y_i<x_m$ since $x_d\mid y_i$ for each integer $i$ with $2\le i\le l$. And noticing that $[y_1,x_d]<x_m$, we deduce that 
	$[y_1,x_d],[y_2,x_d],\ldots,[y_l,x_d]$ are proper divisors of $x_m$. Then by Lemma \ref{lem8} and (\ref{e3.30}), we obtain that 
	\begin{align}
		\sum_{x_r|x_m}{c_{rm}\left[ x_d,x_r \right]^e}>0. \label{e3.31}
	\end{align}
	
	At the same time, since  $(y_1,x_d)=\gcd(Y_L)$, $x_d=(x_c,\gcd(Y_{L\setminus\{1,2\}}))$ and $x_d\mid y_2$, we deduce by (\ref{3.35}) that 
	\begin{align}
		\sum_{x_r|x_m}c_{rm}\left[x_d,x_r\right]^e
		&=\gamma(x_m)+\sum_{I\subset L\setminus\{1\}}(-1)^{|I|+1}\left[ x_d,(y_1,\gcd(Y_I))\right]^e \nonumber	\\
		&=\gamma(x_m)-\left[x_d,y_1\right]^e+\sum_{I\subset L\setminus\{1\}, I\neq \emptyset}(-1)^{|I|+1}\left[ x_d,(y_1,\gcd(Y_I))\right]^e\nonumber\\
		&=\gamma(x_m)-\frac{x_d^e~y_1^e}{\gcd(Y_L)^e}+\sum_{I\subset L\setminus\{1\}, I\neq \emptyset}(-1)^{|I|+1}
		\frac{x_d^e	(y_1,\gcd(Y_I))^e}{\gcd(Y_L)^e}\nonumber\\
		&=\gamma(x_m)-\frac{x_d^e}{\gcd(Y_L)^e}
		\bigg(y_1^e+\sum_{I\subset L\setminus\{1\}, I\neq\emptyset}(-1)^{|I|+1}(y_1,\gcd(Y_I))^e\bigg),\label{e3.32}
	\end{align}
	where $$\gamma(x_m):=x_m^e+\sum_{I\subset L\setminus\{1\}, I\neq \emptyset}(-1)^{|I|}\left[ x_d,\gcd(Y_I)\right]^e=x_m^e+\sum_{I\subset L\setminus\{1\}, I\neq \emptyset}(-1)^{|I|}\gcd(Y_I)^e.$$
	By Lemma \ref{lem2}, we have 
	\begin{align}\nonumber
		\alpha _{e, S}\left( x_m \right)&=x_{m}^{e}+\sum_{I\subset L,I\neq \emptyset}(-1) ^{|I|}\gcd(Y_I)^e\\
		&=\gamma(x_m)+
		\sum_{I\subset L\setminus\{1\}}(-1)^{|I|+1}(y_1,\gcd(Y_I))^e\nonumber\\
		&=\gamma(x_m)-y_1^e+
		\sum_{I\subset L\setminus\{1\}, I\neq \emptyset}(-1)^{|I|+1}(y_1,\gcd(Y_I))^e.
		\label{e3.33}
	\end{align}
	
	Finally, by (\ref{e3.32}) and  (\ref{e3.33}), we deduce that  
	\begin{align}\nonumber
		&\alpha _{e, S}(x_m)-\sum_{x_r|x_m}{c_{rm}\left[ x_d , x_r \right] ^e}\\
		=&\bigg(\frac{x_d^e}{\gcd(Y_L)^e}-1\bigg)
		\bigg(y_1^e+\sum_{I\subset L\setminus\{1\}, I\neq\emptyset}(-1)^{|I|}(y_1,\gcd(Y_I))^e\bigg)
		\nonumber\\
		&=\bigg(\frac{x_d^e}{\gcd(Y_L)^e}-1\bigg)
		\bigg(y_1^e+\sum_{I\subset L\setminus\{1\}, I\neq\emptyset}(-1)^{|I|}\gcd\bigg(\bigcup_{j\in I}\{(y_1,y_j)\}\bigg)^e\bigg).\label{e3.34}
	\end{align}
	For any given integers $2\le i\le l$, since $G_S(x_m)=Y_L$,   we have
	$(y_1,y_2),(y_1,y_3)\ldots,(y_1,y_l)$ are proper divisors of $y_1$. Using the fact that  $x_d>\gcd(Y_L)$, we obtain by (\ref{e3.34}) and Lemma \ref{lem8} that 
	\begin{align}
		\alpha _{e, S}(x_m)-\sum_{x_r|x_m}{c_{rm}\left[ x_d , x_r \right] ^e}
		>0.\label{e3.35}
	\end{align}
	Thus by (\ref{e3.31}) and (\ref{e3.35}), we get $0<g(d,m)<1$. This implies that $g(d,m)\not\in \mathbb{Z}$ as desired. So Case 2 is proved. 
	
	This finishes the proof of Lemma \ref{lem9}. 
	\hfill\ensuremath{\mbox{$\Box$}}\\

	
	\noindent{\bf Proof of the necessity.} Suppose that
	the power LCM matrix $\left[S^e\right]$ is divisible by the power GCD matrix $\left(S^e\right)$ in the ring $M_n\left(\mathbb{Z}\right)$. That is $[S^e](S^e)^{-1}\in M_n\left(\mathbb{Z}\right) $ or $(S^e)^{-1}[S^e]\in M_n\left(\mathbb{Z}\right)$. Note that
	$$\big([S^e](S^e)^{-1}\big)^\top=\big((S^e\big)^{-1}\big)^\top[S^e]^\top=\big((S^e\big)^\top\big)^{-1}[S^e]=(S^e)^{-1}[S^e]
	$$
	since $[S^e]$ and $(S^e)$ are symmetric integer matrices. It is sufficient to prove that if $S$ does not satisfy the condition $\mathcal{C}$, then $ [S^e](S^e)^{-1}\not\in M_n\left(\mathbb{Z}\right).$
	
	Now we assume that $S$ does not satisfy the condition $\mathcal{C}$.  In order to prove 
	necessity, in what follows,  we only need to show that 
	there exist two integers $1\le l,s\le n$ such that $$\Big([S^e](S^e)^{-1}\Big)_{ls}\not\in \mathbb{Z}.$$ 
	
	Since  $S$ does not satisfy the condition $\mathcal{C}$ and $\max_{x \in S} \{|G_{S}(x)|\} \geq 4$,  there at least exists  an element $x\in S$ such that $|G_S(x)|\ge2$ and not satisfying the condition $\mathcal{C}$.
	Let $x_m\in S$ be the largest element which  does not satisfy the condition $\mathcal{C}$.	
	Using the fact that  $S$ is a gcd-closed set, 
 we derive from  Lemmas \ref{lem2.9} and \ref{lem9} that there exists an element $x_l\in S$ such that  $g(l,m)\not\in \mathbb{Z}$. Then the set $\{x_i\in S:x_m|x_i \text{ and } g\left(l, i\right)\notin\mathbb{Z}\}$ is nonempty. Write $$x_s:=\max\{x_i\in S:x_m|x_i \text{ and } g\left(l, i\right)\notin\mathbb{Z}\}.$$
	So  $g\left(l, s\right)\notin\mathbb{Z}$ and $g\left(l, j\right)\in\mathbb{Z}$ if  $x_s|x_j$ and $x_s<x_j$. Then by Lemma \ref{lem1} and (\ref{1}), we deduce that 
	\begin{align}
		\left( \left[ S^e \right] \left( S^e \right) ^{-1} \right) _{ls}
		&=\sum_{r=1}^n{\left[ x_l, x_r \right] ^e}
		\sum_{\begin{subarray}{c} x_r\mid x_k \\ x_s\mid x_k
		\end{subarray}}{\frac{c_{rk}c_{sk}}{\alpha _{e, S}\left( x_k \right)}}\nonumber\\
		&=\sum_{x_s|x_k}c_{sk}\frac{1}{\alpha _{e, S}\left( x_k \right)}\sum_{x_r|x_k}c_{rs}\left[ x_l, x_r \right] ^e\nonumber\\
		&=\sum_{x_s|x_k}c_{sk}\cdot g(l,k)\nonumber\\
		&=g\left( l, s \right) +\sum_{\begin{subarray}{c}
				x_s|x_k\\
				x_s<x_k\\
		\end{subarray}}{c_{sk}\cdot g\left( l, k \right)}\notin\mathbb{Z}\nonumber
	\end{align}
	as required. 
	
	The proof of the necessity  of Theorem \ref{thm1} is complete.       \hfill\ensuremath{\mbox{$\Box$}}
	
	\section{Remarks}
	
	Clearly, Theorem \ref{thm1} confirms the Conjecture \ref{C3}. Consequently  Conjectures \ref{C1} and \ref{C2} are true. Furthermore, Theorem \ref{thm1} together with  the main results in \cite{FHZ09DM} and \cite{ZCH22JCTA} would give a complete answer to the Problem \ref{P1}.
	
	It is well-known that if the set $S$ is gcd-closed, then the structure $(S,\mid)$ itself constitutes a meet-semilattice. The nonsingularity of power GCD and power LCM matrices have been studied by using lattice-theoretic methods, see e.g. \cite{AST05LMA, HMM20JCTA,IK18LAA,MHM15JCTA}. Naturally, we can also view the divisibility of power GCD and power LCM matrices via the lattice structure. By Lemma \ref{lem3.1}, if $S$ is a gcd-closed set satisfying condition $\mathcal{C}$, then $(\langle G_S(x)  \rangle \cup \{x\}, \mid) $ is a Boolean algebra for all $x \in S$ with $|G_S(x)| \ge 2$. Inspired by the characteristic of Boolean algebra, we utilize inclusion of subsets to study the divisibility structure of elements of gcd-closed set $S$. For example,  we let  $x\in S$ satisfying condition $\mathcal{C}$, $G_S(x)=\{y_1,\ldots,y_l\}$ with  $l=2,3,4$,  and $y_{i_1\ldots i_k}=(y_{i_1},\ldots,y_{i_k})$. The divisibility structure of the set $\langle G_S(x)  \rangle \cup \{x\}$ are clearly illustrated in Figures 1 and 2. 
	\begin{figure}[!htb]
		\setlength{\unitlength}{1.5mm}
		\begin{center}
			\begin{picture}(40,25)
				\put(20,0){\circle*{0.7}}
				\put(21,-1){\footnotesize{$y$}}
				\put(28,8){\circle*{0.7}}
				\put(12,8){\circle*{0.7}}
				\put(29,7){\footnotesize{$y_{1}$}}
				\put(9,7){\footnotesize{$y_{2}$}}
				
				\put(20,17){\footnotesize{$x$}}
				
				\put(20,16){\circle*{0.7}}

				\put(20,0){\line(1,1){8}}
				\put(20,0){\line(-1,1){8}}
				\put(12,8){\line(1,1){8}}
				\put(28,8){\line(-1,1){8}}
			\end{picture}
			\begin{picture}(40,25)
				\put(20,0){\circle*{0.7}}
				\put(21,-1){\footnotesize{$y$}}
				\put(20,0){\line(0,1){8}}	
				
				\put(20,8){\circle*{0.7}}
				\put(28,8){\circle*{0.7}}
				\put(12,8){\circle*{0.7}}
				\put(29,7){\footnotesize{$y_{12}$}}
				\put(21,7){\footnotesize{$y_{13}$}}
				\put(9,7){\footnotesize{$y_{23}$}}

				\put(20,16){\circle*{0.7}}
				\put(28,16){\circle*{0.7}}
				\put(12,16){\circle*{0.7}}
				\put(29,15){\footnotesize{$y_{1}$}}
				\put(22,15){\footnotesize{$y_{2}$}}
				\put(9,15){\footnotesize{$y_{3}$}}
				
				\put(20,24){\circle*{0.7}}
				\put(20,25){\footnotesize{$x$}}
				
				\put(20,0){\line(0,1){8}}
				\put(20,0){\line(-1,1){8}}
				\put(20,0){\line(1,1){8}}
				\put(12,8){\line(0,1){8}}
				\put(12,8){\line(1,1){8}}
				\put(28,8){\line(0,1){8}}
				\put(28,8){\line(-1,1){8}}
				\put(20,8){\line(-1,1){8}}
				\put(20,8){\line(1,1){8}}
				\put(20,16){\line(0,1){8}}
				\put(28,16){\line(-1,1){8}}
				\put(12,16){\line(1,1){8}}	
			\end{picture}
		\end{center}
		\vspace{-5mm}
		\caption{The Boolean lattice  $B_2$ and $B_3$.}\label{fig}
	\end{figure}
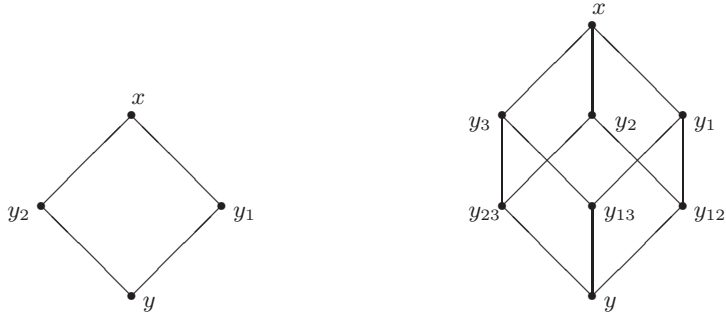
	\begin{figure}[!htb]
		\setlength{\unitlength}{1.5mm}
		\begin{center}
			\begin{picture}(40,30)
				\put(20,0){\circle*{0.7}}\put(20.5,-1.5){\footnotesize{$y$}}
				\put(20,0){\line(1,2){4}}
				\put(20,0){\line(3,2){12}}
				\put(20,0){\line(-1,2){4}}
				\put(20,0){\line(-3,2){12}}
				
				\put(32,8){\circle*{0.7}}
				\put(24,8){\circle*{0.7}}
				\put(16,8){\circle*{0.7}}
				\put(8,8){\circle*{0.7}}
				\put(33,7.5){\footnotesize{$y_{123}$}}
				\put(24.5,7){\footnotesize{$y_{124}$}}
				\put(17.5,7.5){\footnotesize{$y_{134}$}}
				\put(3.5,7.5){\footnotesize{$y_{234}$}}

				\put(32,8){\line(1,1){8}}
				\put(32,8){\line(-1,1){8}}
				\put(32,8){\line(0,1){8}}
				\put(24,8){\line(2,1){16}}
				\put(24,8){\line(-1,1){8}}
				\put(24,8){\line(-2,1){16}}
				\put(16,8){\line(0,1){8}}
				\put(16,8){\line(2,1){16}}
				\put(16,8){\line(-2,1){16}}
				\put(8,8){\line(0,1){8}}
				\put(8,8){\line(2,1){16}}
				\put(8,8){\line(-1,1){8}}

				\put(40,16){\circle*{0.7}}
				\put(32,16){\circle*{0.7}}
				\put(24,16){\circle*{0.7}}
				\put(16,16){\circle*{0.7}}
				\put(8,16){\circle*{0.7}}
				\put(0,16){\circle*{0.7}}
				\put(41,15.5){\footnotesize{$y_{12}$}}
				\put(33,15.5){\footnotesize{$y_{13}$}}
				\put(25,15.5){\footnotesize{$y_{23}$}}
				\put(17,15.5){\footnotesize{$y_{14}$}}
				\put(4.5,15.5){\footnotesize{$y_{24}$}}\put(-4,15.5){\footnotesize{$y_{34}$}}

				\put(40,16){\line(-1,1){8}}
				\put(40,16){\line(-2,1){16}}
				\put(32,16){\line(-2,1){16}}
				\put(32,16){\line(0,1){8}}
				\put(24,16){\line(-1,1){8}}
				\put(24,16){\line(0,1){8}}
				\put(16,16){\line(2,1){16}}
				\put(16,16){\line(-1,1){8}}
				\put(8,16){\line(2,1){16}}
				\put(8,16){\line(0,1){8}}
				\put(0,16){\line(2,1){16}}
				\put(0,16){\line(1,1){8}}
				
				\put(32,24){\circle*{0.7}}
				\put(24,24){\circle*{0.7}}
				\put(16,24){\circle*{0.7}}
				\put(8,24){\circle*{0.7}}
				\put(32.5,24){\footnotesize{$y_1$}}
				\put(24.5,24){\footnotesize{$y_2$}}
				\put(17,24){\footnotesize{$y_3$}}
				\put(5,24){\footnotesize{$y_4$}}
				\put(32,24){\line(-3,2){12}}
				\put(24,24){\line(-1,2){4}}
				\put(16,24){\line(1,2){4}}
				\put(8,24){\line(3,2){12}}
				\put(20,32){\circle*{0.7}}
				\put(19.5,33){\footnotesize{$x$}}
				
			\end{picture}
		\end{center}
		\vspace{-5mm}
		\caption{The Boolean lattice  $B_4$.}\label{fig}
	\end{figure}
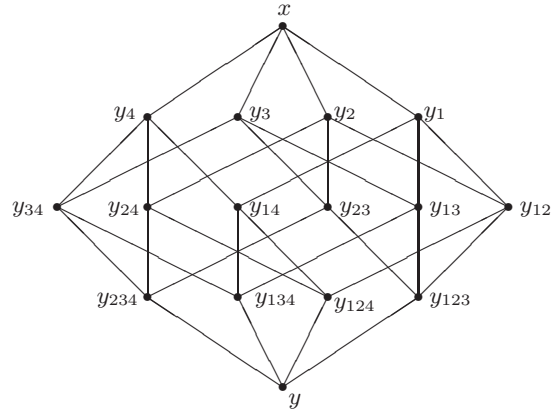

	Some scholars are also interested in the divisibility of GCD-type matrix and LCM-type matrix associated with an arithmetic function, see, e.g.  \cite{BL95LAA,TLC16LMA,Z23CM}.  
	For any given arithmetic function $f$, denote by $ (f(S)):= (f(x_i,x_j)) $ the $ n \times n $ matrix having $ f $ evaluated at the greatest common divisor $(x_i, x_j)$ of $ x_i $ and $ x_j $ as its $ ij\raisebox{0mm}{-}\text{entry} $ and $  (f[S]):= (f[x_i,x_j]) $ the $ n \times n $  matrix having $ f $ evaluated at the least common multiple $ [x_i,x_j] $ of $ x_i $ and $ x_j $ as its 
	$ ij\raisebox{0mm}{-}\text{entry} $. Given any set $S$ of positive integers, denote two classes of arithmetic functions
	\begin{equation*}
		\mathcal{C}_S:=\{f :(f * \mu)(d) \in \mathbb{Z} \text { whenever } d \mid \operatorname{lcm}(S)\}.
	\end{equation*}
	and 
	$$
	\mathcal{D}_S := \{ f \in \mathcal{C}_S: \ f(x) \mid f(y) \ \text{whenever} \ x \mid y \ \text{and} \ x,y \in S \}.
	$$ 
	For any given arithmetic function $f\in \mathcal{C}_S$, Bourque and Ligh \cite{BL93LMA} proved that  $(f(S))$ and $(f[S])$ are integer matrices. In 2011, Li and Tan \cite{LT11DM} showed that if $ S $ is a gcd-closed set with $ \max_{x \in S} \{|G_{S}(x)|\} = 1 $, $ f \in \mathcal{D}_S $ is multiplicative with  $ (f(S)) $ being nonsingular, then $ (f(S))\mid (f[S]) $. There is currently relatively little research on this topic.
	To enclose this paper, we propose the following open problem.
	
	\begin{problem}\label{P2}
		Let $ S $ be a gcd-closed set and  $ f \in \mathcal{C}_S$ be a multiplicative function with the matrix $ (f(S)) $ being nonsingular. Characterize the gcd-closed set $S$  and the function $ f $ such that the matrix $ (f(S)) $ divides the matrix $ (f[S]) $ in the ring $ M_{|S|}(\mathbb{Z}) $.
	\end{problem}
	
	Evidently, Theorem \ref{thm1}, as well as the main results in \cite{FHZ09DM} and \cite{ZCH22JCTA} answer Problem \ref{P2} for the case when $f$ is a power arithmetic function.

\end{document}